\newtheorem{thm}{Theorem}[section]
\newtheorem{cor}[thm]{Corollary}
\newtheorem{lem}[thm]{Lemma}
\newtheorem{pro}[thm]{Proposition}
\theoremstyle{definition}
\numberwithin{equation}{section}
\newcommand{\X}{\mathbb{X}}
\newcommand{\re}{\textup{Re}}
\newcommand{\im}{\textup{Im}}
\newcommand{\B}{\mathcal B}
\newcommand{\Lv}{\mathbf{L}}
\newcommand{\ub}{\mathbf{u}}
\newcommand{\vb}{\mathbf{v}}
\newcommand{\xb}{\mathbf{x}}
\newcommand{\yb}{\mathbf{y}}
\newcommand{\kb}{\mathbf{k}}
\newcommand{\lb}{\mathbf{l}}
\newcommand{\me}{\textup{meas}}
\newcommand\be{\begin{equation}}
\newcommand\ee{\end{equation}}
 \def\D{\mathbf D}
  \def\R{\mathcal R}
\def\E{\mathbb E}
\def\P{\mathbb P}
 \def\rt{ {\mathcal{R}_T}}
 \def\Kcal{  \mathcal{K}}
 \def\Hcal{  \mathcal{H}}
 \def\Phrand{ \Phi_T^{\mathrm{rand}}}
 \def\Phrhat{\widehat{\Phi}_T^{\mathrm{rand}}}
 \def\Phhat{\widehat{\Phi}_T}
\newcommand{\newabstract}[1]{%
  \par\bigskip
  \csname otherlanguage*\endcsname{#1}%
  \csname captions#1\endcsname
  \item[\hskip\labelsep\scshape\abstractname.]
}
\begin{document}

\baselineskip=17pt

\title[Discrepancy bounds]{Discrepancy bounds for the distribution of $L$-functions near the critical line}

\author[Yoonbok Lee]{Yoonbok Lee }
\address{Department of Mathematics \\ Research Institute of Basic Sciences \\ Incheon National University \\ 119 Academy-ro, Yeonsu-gu, Incheon, 22012 \\ Korea}
\email{leeyb@inu.ac.kr, leeyb131@gmail.com}

\date{\today}

\begin{abstract} 
We investigate the joint distribution of $L$-functions on the line $ \sigma= \frac12 + \frac1{G(T)}$ and $ t \in [ T, 2T]$, where $ \log \log T \leq G(T) \leq \frac{ \log T}{ ( \log \log T)^2 }    $.  We obtain an upper bound on the discrepancy between the joint distribution of $L$-functions and that of their random models. As an application we prove an asymptotic expansion of a multi-dimensional version of Selberg's central limit theorem for $L$-functions on $ \sigma= \frac12 + \frac1{G(T)}$ and $ t \in [ T, 2T]$, where  $ ( \log T)^\epsilon \leq G(T) \leq \frac{ \log T}{ ( \log \log T)^{2+\epsilon } }    $ for   $ \epsilon  > 0$. 
\end{abstract}

\keywords{joint distribution of $L$-functions, discrepancy bounds, Selberg's central limit theorem}

\subjclass[2010]{  11M41, 11M06, 11M26.}

\maketitle

\section{Introduction}\label{Introduction}

We investigate the distribution of the Riemann zeta function $ \zeta(s)$ for $ \re(s) > \frac12$ using its probabilistic model defined by the random Euler product
$$ \zeta( \sigma, \X) = \prod_p \bigg(  1 -  \frac{ \X(p)}{ p^\sigma }  \bigg)^{-1},  $$
where the $ \X(p)$ for primes $p$ are the uniform, independent and identically distributed random variables on the unit circle in $\mathbb{C}$. The product converges almost surely for $ \sigma > \frac12 $ by Kolmogorov's three series theorem. Our main question is how well the distribution of $\zeta(\sigma, \X)$ approximate that of the Riemann zeta function for $ \frac12 < \sigma < 1 $.

Consider two measures
$$\Phi_{\zeta, T}( \sigma, \B )  := \frac1T \me \{ t \in [ T, 2T] :   \log \zeta( \sigma + it ) \in \B \}  $$
and 
$$ \Phi_{\zeta }^{\mathrm{rand}} ( \sigma , \B ) := \P ( \log \zeta( \sigma, \X ) \in \B )  $$
 for  a Borel set $ \B$ in $\mathbb C$.
Define the discrepancy between the above two measures  by
$$ \D_{\zeta} ( \sigma ) := 
 \sup_{\R} | \Phi_{\zeta, T}  ( \sigma,  \R) -  \Phi_{\zeta }^{\mathrm{rand}}  ( \sigma,  \R) | ,$$
 where $\R$ runs over all rectangular boxes in $\mathbb{C}$ with  sides parallel to the coordinate axes and possibly unbounded. This quantity measures the amount to which the distribution of $ \log \zeta( \sigma, \X)$ approximates that of $ \log \zeta ( \sigma + it ) $.   
 
 Harman and Matsumoto \cite{HM} showed that
 $$ \D_{\zeta} ( \sigma ) \ll ( \log T)^{ -    \frac{ 4 \sigma -2}{ 21+8 \sigma } + \varepsilon } $$
 for fixed $ \frac12 < \sigma < 1 $ and any $\varepsilon > 0 $. See also Matsumoto's earlier results in \cite{M1}, \cite{M2} and \cite{M3}.   Lamzouri, Lester and Radziwi\l\l ~\cite{LLR} improved it to 
 $$ \D_{\zeta} ( \sigma ) \ll ( \log T)^{ -   \sigma } $$
for fixed $ \frac12 < \sigma < 1 $.  Define
\begin{equation}\label{def sigma T}
 \sigma_T := \frac12 + \frac{1}{ G(T)}
 \end{equation}
with  $ 4 \leq  G(T)   \leq ( \log T)^\theta$ and  fixed  $ 0 < \theta  < \frac12 $, then Ha and Lee \cite{HL} extended above results such that  
 $$ \D_{\zeta} ( \sigma_T  ) \ll ( \log T)^{ -   \eta } $$
holds for some $ 0 < \eta <  \frac{1- \theta}{4} $. Here, we extend it to hold for $ \sigma_T $ closer to $ \frac12 $.
\begin{thm}\label{thm zeta disc}
 Assume that  $ \log \log T \leq G(T) \leq \frac{  \log T}{ ( \log \log T)^2 } $, then we have
  $$ \D_{\zeta} ( \sigma_T  ) \ll  \frac{  \sqrt{ G(T) } \log \log T}{ \sqrt{ \log T}} . $$
\end{thm}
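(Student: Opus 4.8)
The plan is to reduce the discrepancy to a comparison of the characteristic functions of the two measures and then to control that comparison by a Dirichlet polynomial approximation of $\log\zeta$ whose usable length is dictated by how close $\sigma_T$ is to the critical line. Write $\widehat\Phi_{\zeta,T}(\sigma_T,\ub)=\frac1T\int_T^{2T}\exp\bigl(i\,\re(\overline\ub\,\log\zeta(\sigma_T+it))\bigr)\,dt$ and $\widehat\Phi_\zeta^{\mathrm{rand}}(\sigma_T,\ub)=\ex\exp\bigl(i\,\re(\overline\ub\,\log\zeta(\sigma_T,\X))\bigr)$ for $\ub\in\mathbb C$, the characteristic functions of $\Phi_{\zeta,T}$ and $\Phi_\zeta^{\mathrm{rand}}$ under the pairing $\re(\overline\ub\,\cdot)$. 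First I would apply a two-dimensional Beurling--Selberg/Esseen inequality: approximating the indicator of a rectangle $\R$ from above and below by extremal functions whose Fourier transforms are supported in $|\ub|\le U$ gives
$$\D_\zeta(\sigma_T)\ll \frac1U+\int_{|\ub|\le U}\bigl|\widehat\Phi_{\zeta,T}(\sigma_T,\ub)-\widehat\Phi_\zeta^{\mathrm{rand}}(\sigma_T,\ub)\bigr|\,\frac{d\ub}{|\ub|},$$
where the term $1/U$ is the smoothing error, admissible because the marginal densities of the random model are bounded (its coordinates have variance $\asymp\sum_p p^{-2\sigma_T}\asymp\log G(T)$). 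Everything then rests on bounding the characteristic-function integral and on choosing $U$ as large as the arithmetic permits.

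For the random model I would use independence of the $\X(p)$ to factor
$$\widehat\Phi_\zeta^{\mathrm{rand}}(\sigma_T,\ub)=\prod_p\ex\exp\Bigl(i\,\re\Bigl(\overline\ub\sum_{k\ge1}\frac{\X(p)^k}{k\,p^{k\sigma_T}}\Bigr)\Bigr),$$
and record the Gaussian-type decay $\widehat\Phi_\zeta^{\mathrm{rand}}(\sigma_T,\ub)\ll\exp(-c|\ub|^2\log G(T))$ in the range of interest; this same factorization yields the density bound invoked above and identifies the target that the zeta side must reproduce. On the zeta side I would approximate $\log\zeta(\sigma_T+it)$ by the Selberg prime sum $P_X(t)=\sum_{n\le X}\Lambda(n)/(n^{\sigma_T+it}\log n)$ by means of an explicit formula, controlling the mean-square error and the measure of the exceptional set of $t$ on which the approximation is poor through a zero-density estimate; the exceptional set feeds directly into the discrepancy and must be kept below the target. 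On the complementary set one expands $\exp(i\,\re(\overline\ub\,P_X(t)))$ into a Dirichlet series $\sum_n c_n(\ub)\,n^{-it}$ and uses the orthogonality
$$\frac1T\int_T^{2T}n^{-it}\,dt=\delta_{n=1}+O\Bigl(\frac nT\Bigr).$$
The diagonal term $n=1$ reproduces exactly the Euler-product expectation $\prod_{p\le X}\ex[\,\cdot\,]$ (hence $\widehat\Phi_\zeta^{\mathrm{rand}}$ up to the prime tail $p>X$), while the off-diagonal terms $n\ge2$ contribute an error that is controlled only so long as the frequencies $n$ occurring with non-negligible coefficient stay below $T$.

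This last requirement is the crux, and it is where the proximity to $\tfrac12$ is decisive. The frequencies appearing in the expansion are $n=\prod_p p^{|k_p|}$, and a second-moment computation shows that the bulk of the mass sits at
$$\log n\asymp|\ub|^2\sum_{p\le X}\frac{\log p}{p^{2\sigma_T}}\asymp|\ub|^2\,G(T).$$
Thus the orthogonality step, and with it the whole comparison, is legitimate only for $|\ub|^2G(T)$ comfortably below $\log T$; forcing the high-frequency tail past $T^{1-\varepsilon}$ to be negligible costs a further factor, so that $U$ can be taken only as large as $\asymp(\log\log T)^{-1}\sqrt{\log T/G(T)}$. The hypothesis $G(T)\le \log T/(\log\log T)^2$ is exactly what makes this range nonempty and simultaneously allows $X$ to be chosen with $\log X\gg G(T)$, so that the prime tail $\sum_{p>X}p^{-2\sigma_T}$ is small while the expansion remains short. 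With this value of $U$ the dominant term is the smoothing error $1/U\asymp\sqrt{G(T)}\,\log\log T/\sqrt{\log T}$, which is the asserted bound. The main obstacle is the entire zeta-side step: near the critical line one must make the exceptional set, the mean-square approximation error, and the off-diagonal sum all uniformly $O(\sqrt{G(T)}\,\log\log T/\sqrt{\log T})$, and the growth of the effective length like $|\ub|^2G(T)$ is precisely what both forces the truncation and, through it, determines the final bound.
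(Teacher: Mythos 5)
Your proposal follows essentially the same route as the paper: a Beurling--Selberg/Esseen reduction of $\D_{\zeta}(\sigma_T)$ to a comparison of characteristic functions on $|\ub|\le U$, a mean-square approximation of $\log\zeta(\sigma_T+it)$ by a Selberg-type prime sum controlled through the zero-density estimate, moment matching of that Dirichlet polynomial against the random Euler product via orthogonality, and a cutoff $U\asymp\sqrt{\log T/G(T)}/\log\log T$ forced by the effective length of the resulting expansion, with the final bound coming from the smoothing error $1/U$ --- exactly the paper's path through Theorem \ref{thm disc}, Proposition \ref{pro disc trans} and Theorem \ref{thm 2nd moment}. The only caveat is that the step you rightly single out as the main obstacle, namely an unconditional $L^2$ bound for $\log\zeta(\sigma_T+it)$ minus the prime sum that remains valid for $\sigma_T-\tfrac12$ as small as $(\log\log T)^2/\log T$ with no residual exceptional set, is precisely the paper's new technical input (Theorem \ref{thm 2nd moment}, proved by adapting the Selberg--Tsang high-moment machinery), so your outline asserts rather than establishes the heart of the argument.
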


Next we consider a multivariate extension. Let $L_1, \ldots, L_J$  be  $L$-functions satisfying the following assumptions:
\begin{enumerate}
\item[A1:] (Euler product)  For $ j = 1, \ldots , J $ and $\re(s)>1$ we have 
$$ L_j  ( s) = \prod_p \prod_{i=1}^d \bigg(   1 -   \frac{ \alpha_{j,i}(p)}{p^s} \bigg)^{-1} , $$
where $ | \alpha_{j,i} (p)  | \leq p^{\eta}$ for some fixed $ 0 \leq \eta < \frac12 $ and for every $ i = 1, \ldots , d.$

\item[A2:] (Analytic continuation)  Each $(s-1)^m L_j (s) $ is an entire function of finite order for some integer $ m \geq 0$. 

\item[A3:] (Functional equation) The functions $L_1, L_2, \dots, L_J$ satisfy the same functional equation
  $$ \Lambda_j(s) = \omega \overline{ \Lambda_j( 1- \bar{s})} ,$$ 
where
$$ \Lambda_j(s) := L_j(s) Q^s \prod_{\ell=1}^k \Gamma ( \lambda_{\ell} s+\mu_{\ell} ) , $$  
$ | \omega| =1 $, $Q>0$, $ \lambda_{\ell}>0 $ and $\mu_{\ell} \in \mathbb{C} $ with $ \re ( \mu_{\ell} ) \geq 0 $.
\item[A4:] (Ramanujan hypothesis on average)
$$ \sum_{ p \leq x } \sum_{i=1}^d | \alpha_{j,i }(p) |^2 = O( x^{1+\epsilon})$$
holds  for every $ \epsilon>0$ and for every $ j = 1, \ldots , J $ as $ x \to \infty $.

\item[A5:] (Zero density hypothesis) Let $N_{f } ( \sigma, T )$ be the number of zeros of $f (s)$ in $\re(s) \geq \sigma$ and $ 0 \leq \im(s) \leq T$. Then there exists a constant  $\kappa >0$   such that for every  $j= 1, \ldots, J$ and all $\sigma\geq  \frac12 $ we have 
$$   N_{L_j } ( \sigma, T ) \ll T^{1 - \kappa (\sigma -  \frac12 ) }  \log T .$$

\item[A6:] (Selberg orthogonality conjecture) By assumption A1 we can write 
 $$ \log L_j(s) = \sum_p \sum_{r=1}^\infty   \frac{ \beta_{L_j} (p^r)}{ p^{rs}}  . $$
Then for all $1\leq j, k \leq J$, there exist constants $ \xi_j >0$ and $ c_{j,k}$ such that
$$  \sum_{p \leq x} \frac{ \beta_{L_j}(p)  \overline{\beta_{L_k}(p) } }{p} = \delta_{j,k} \xi_j \log \log x + c_{j,k} + O \bigg( \frac{1}{ \log x} \bigg),$$ 
  where $ \delta_{j,k} = 0 $ if $ j \neq k $ and $ \delta_{j,k} = 1 $ if $ j = k$. 

\end{enumerate}
The assumptions A1--A6 are standard and expected to hold for all $L$-functions arising from automorphic representation for $GL(n)$. In particular, they are verified by $GL(1)$ and $GL(2)$ $L$-functions, which are the Riemann zeta function, Dirichlet $L$-functions, $L$-functions attached to Hecke holomorphic or Maass cusp forms.

Define
$$ \Lv(s) : =\Big(\log |L_1(s)|, \dots, \log |L_J(s)|, \arg L_1(s), \dots, \arg L_J(s) \Big)  $$
 and
$$ \Lv(\sigma, \X) :=\Big(\log |L_1(\sigma, \X)|, \dots, \log |L_J(\sigma, \X)|, \arg L_1(\sigma, \X), \dots, \arg L_J(\sigma, \X) \Big)$$
for $ \sigma > \frac12$, where
\begin{equation}\label{def Lj random}
 L_j ( \sigma, \X) := \prod_p \prod_{i=1}^d  \bigg( 1 - \frac{ \alpha_{j,i}(p) \X(p) }{p^\sigma} \bigg)^{-1}
 \end{equation}
converges almost surely for $ \sigma > \frac12$ again by Kolmogorov's three series theorem. Then $ \Lv ( \sigma, \X) $ is the random model of $ \Lv ( s)$. 
 Define two measures
\begin{equation}\label{def PhiTrt}
  \Phi_T (\B ):= \frac1T \mathrm{meas}  \{ t\in[T, 2T] :  \Lv (\sigma_T +it )\in \B  \} 
  \end{equation}
and 
\be\label{Phi rand def}
\Phrand  (\B ):=
 \mathbb{P}(\Lv(\sigma_T , \X) \in \B) 
 \ee
 for  a Borel set $ \B$ in $\mathbb R^{2J}$ and $ \sigma_T$ defined in \eqref{def sigma T}.
  The discrepancy between the above two measures is defined by
 $$  \D(\sigma_T) :=  \sup_{\R} | \Phi_T ( \R) -  \Phrand ( \R) |  ,$$
 where $\R$ runs over all rectangular boxes of $ \mathbb{R}^{2J}$ with  sides parallel to the coordinate axes and possibly unbounded. Then Theorem \ref{thm zeta disc} is a special case of the following theorem. 
  \begin{thm}\label{thm disc}
 Assume that $ \log \log T \leq G(T) \leq \frac{ \log T}{ ( \log \log T)^2 } $, then we have
 $$  \D(\sigma_T)   \ll \frac{\sqrt{G(T)} \log \log T }{ \sqrt{  \log T}} . $$ 
\end{thm}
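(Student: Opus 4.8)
The plan is to pass to Fourier transforms and bound the discrepancy by the distance between the characteristic functions of the two measures, following the strategy of Lamzouri--Lester--Radziwi\l\l{} and Ha--Lee. Since Theorem~\ref{thm zeta disc} is exactly the case $J=d=1$ with $L_1=\zeta$, it suffices to prove the general statement. Writing $\xb\in\mathbb R^{2J}$ for the dual variable and setting
$$\Phhat(\xb)=\frac1T\int_T^{2T}\exp\big(i\,\xb\cdot\Lv(\sigma_T+it)\big)\,dt,\qquad \Phrhat(\xb)=\E\,\exp\big(i\,\xb\cdot\Lv(\sigma_T,\X)\big),$$
I would first invoke a quantitative multidimensional Berry--Esseen (Beurling--Selberg) inequality. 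This bounds $\D(\sigma_T)$ by a weighted integral of $|\Phhat(\xb)-\Phrhat(\xb)|$ over a box $\{|\xb|\le R\}$, together with a smoothing error of order $R^{-1}$ times the sup-norm of the density of $\Phrand$. Because $\sigma_T=\tfrac12+\tfrac1{G(T)}$ gives $\sum_p p^{-2\sigma_T}\asymp\log G(T)$, each coordinate of $\Lv(\sigma_T,\X)$ has variance $V\asymp\log G(T)$, so the density is smooth with sup-norm $\asymp (\log G(T))^{-J}$ and the transform concentrates at scale $|\xb|\asymp V^{-1/2}$.

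For the random side I would exploit the independence of the $\X(p)$: the transform $\Phrhat(\xb)$ factors as an absolutely convergent Euler product over primes, and a local computation shows it has (sub-)Gaussian decay in $\xb$ governed by the variance $V\asymp\log G(T)$. Assumption A6 is what pins down the full $2J\times 2J$ covariance structure (the diagonal $\xi_j\log\log$ behaviour and the cross-correlations $c_{j,k}$), ensuring the Gaussian that $\Phrhat$ approximates is the correct one.

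The heart of the argument is the estimate of $\Phhat(\xb)$ for $|\xb|\le R$. Using the analytic continuation A2, the functional equation A3, and crucially the zero-density hypothesis A5, I would show that for all $t\in[T,2T]$ outside an exceptional set of measure $\ll T^{1-\kappa/G(T)}\log T$ (negligible in the stated range of $G(T)$), each $\log|L_j(\sigma_T+it)|$ and $\arg L_j(\sigma_T+it)$ is well approximated by a Dirichlet polynomial $\sum_{p\le X}\re\big(\beta_{L_j}(p)p^{-\sigma_T-it}\big)$ plus a controlled prime-power contribution, for a cutoff $X$ to be chosen; the zeros in A5 are precisely what one needs to handle the jumps of $\arg L_j$. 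Substituting these polynomials into $\Phhat$ and expanding, the mean over $t\in[T,2T]$ is evaluated by the mean value theorem for Dirichlet polynomials, whose engine is the orthogonality $\frac1T\int_T^{2T}(m/n)^{it}\,dt=\delta_{m,n}+O(\cdots)$. The diagonal terms reproduce exactly the Euler factors of $\Phrhat$, while the off-diagonal terms are $O(T^{-\delta})$ provided the total length that arises stays below $T^{1-\delta}$; this restricts the admissible moment order to $K\asymp \log T/\log X$.

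The \emph{main obstacle} is the regime where $G(T)$ is as large as $\frac{\log T}{(\log\log T)^2}$, i.e. $\sigma_T$ extremely close to $\tfrac12$. Then the primes carrying the variance run up to $p\approx e^{G(T)}$, so one is forced to take $\log X\asymp G(T)$ in order to capture essentially all of $V\asymp\log G(T)$; the leftover tail $\sum_{p>X}p^{-2\sigma_T}$ then perturbs the distribution by $\ll \log(G(T)/\log X)/\log G(T)$. But this choice of $X$ caps the number of usable moments at $K\asymp \log T/G(T)$, and matching only $K$ moments of $\Phhat$ to $\Phrhat$ (against the $\frac1R$-weighted integral) leaves a residual of order $\asymp \sqrt{1/K}$ times a logarithmic loss coming from the prime-power truncation and the zero-density step. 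Balancing these contributions, with $\log X\asymp G(T)$ and $K\asymp\log T/G(T)$, yields
$$\D(\sigma_T)\ll \frac{\log\log T}{\sqrt{K}}\asymp \frac{\sqrt{G(T)}\,\log\log T}{\sqrt{\log T}},$$
the $\log\log T$ factor recording the loss in the approximation and truncation steps, which is exactly the claimed bound.
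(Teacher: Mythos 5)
Your overall architecture --- compare characteristic functions, smooth with Beurling--Selberg, replace $\log L_j$ by a Dirichlet polynomial, match moments of that polynomial against the random model, and read off the bound from the admissible range of the Fourier variable --- is exactly the paper's (Proposition \ref{pro disc trans} combined with Lemma \ref{lem BS}), and your final numerology is the right one. The gap is at the step you yourself call the heart of the argument. You propose to justify $\log L_j(\sigma_T+it)\approx R_{j,Y}(\sigma_T+it)$ \emph{pointwise} for all $t$ outside an exceptional set of measure $\ll T^{1-\kappa/G(T)}\log T$ controlled by A5. That is precisely the mechanism of \cite[Lemma 4.2]{LL}, which the introduction of this paper explicitly identifies as the reason the earlier result stops at $G(T)\le \sqrt{\log T}/\log\log T$: ``this approximation is not useful if $\sigma_T$ is closer to $\frac12$.'' The actual proof replaces it by the mean-square estimate of Theorem \ref{thm 2nd moment},
$$\frac1T\int_T^{2T}|\log L(\sigma_T+it)-R_Y(\sigma_T+it)|^2\,dt\ll e^{-\kappa_0\log T/G(T)}+e^{-2\log Y/G(T)}\frac{G(T)}{\log Y},$$
which is fed into the characteristic-function comparison via Cauchy--Schwarz. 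Proving this $L^2$ bound is the technical core of the paper (Section \ref{sec high moment}): it requires the Selberg--Tsang apparatus --- the quantity $\sigma_{x,t}$ built from zeros near $t$, the mollified sums $\sum_n\beta_L(n)g_x(n)n^{-\lambda_t-it}$, the zero-sum estimates of Lemmas \ref{Ts Lemma 5.1} and \ref{Ts Lemma 5.2}, and careful use of A4/A6 to control the coefficients $\beta_L(p^r)$, which need not be bounded. None of this is present in, or replaceable by, the exceptional-set argument you sketch, and you give no reason why the pointwise route would extend to $G(T)$ as large as $\log T/(\log\log T)^2$ when it demonstrably capped out earlier.

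There is also a quantitative problem: you take $\log X\asymp G(T)$. Then the discarded tail has variance $\sum_{p>X}p^{-2\sigma_T}\asymp e^{-2\log X/G(T)}G(T)/\log X\asymp 1$, and by your own accounting the resulting perturbation of the distribution is $\ll 1/\log G(T)$, which is far larger than the target $\sqrt{G(T)}\log\log T/\sqrt{\log T}$ throughout the whole range. One must take $\log Y\asymp G(T)\log\log T$, as the paper does, so that the tail contributes a negative power of $\log T$; the moment cap $N\asymp\log T/(G(T)\log\log T)$ and hence the final bound then come out as claimed. Finally, your appeal to a bounded density for $\Phrand$ to absorb the smoothing error is itself a nontrivial input; the paper avoids it by first truncating to the box $I_T=[-A_3\log\log T,A_3\log\log T]^{2J}$ via the large-deviation estimate of Corollary \ref{cor large dev}, which rests on the high-moment bound of Theorem \ref{thm high moment} --- another ingredient absent from your outline.
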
 
 The above theorem is an extension of \cite[Theorem 2.3]{LL}, which shows the same estimate, but only for $ \log \log T \leq  G(T) \leq  \frac{   \sqrt{ \log T}}{ \log \log T} $. In the proof of \cite[Theorem 2.3]{LL} 
   we have used an approximation of each $\log L_j ( \sigma_T + it )$ by a Dirichlet polynomial
 \begin{equation}\label{def RjY}
  R_{j, Y} ( \sigma_T + i t ) :=  \sum_{ p^r \leq Y}  \frac{  \beta_{L_j} ( p^r) }{p^{r ( \sigma_T + it)}} 
  \end{equation}
for $t \in [T, 2T]$ with some exception. The exception essentially comes from possible nontrivial zeros  of each $L_j (s)$ off the critical line and the set of exceptional $t$ in $[T,2T]$ has a small measure by assumption A5. See \cite[Lemma 4.2]{LL} for a detail. However, this approximation is not useful  if $ \sigma_T$ is closer to $ \frac12 $. 
We overcome such difficulty by means of  the 2nd moment estimation of $ \log L_j (\sigma_T + it )$ in Theorem \ref{thm 2nd moment}.

As an application of Theorem \ref{thm disc} we consider Selberg's central limit theorem. Let $ \psi_{j,T} := \xi_j   \log G( T)  $ for $ j \leq J$ and
$$  \rt := \prod_{j=1}^J [ a_j \sqrt{ \pi \psi_{j, T}} , b_j  \sqrt{ \pi \psi_{j, T}}]  \times \prod_{j=1}^J [ c_j \sqrt{ \pi \psi_{j, T}} , d_j  \sqrt{ \pi \psi_{j, T}}]  $$
for fixed real numbers $ a_j, b_j, c_j, d_j$. Then an asymptotic formula for
$$  \Phi_T (\rt ) = \frac1T \mathrm{meas}  \{ t\in[T, 2T] : \frac{ \log L_j  (\sigma_T +it )}{ \sqrt{ \pi \psi_{j, T}}  } \in   [ a_j , b_j  ]  \times  [ c_j   , d_j   ] \mathrm{~for~} j=1, \ldots , J  \} $$
is called Selberg's central limit theorem. See \cite[Theorem 2]{Sel} for Selberg's original idea. 
Let $0 < \theta < 1 $. To find an asymptotic of $ \Phi_T ( \rt )$ for 
\begin{equation}\label{GT range 1}
  ( \log T)^\theta \leq G(T) \leq \frac{ \log T}{ ( \log \log T )^2 },
  \end{equation}
   it is now enough to estimate $ \Phrand ( \rt)$ due to Theorem \ref{thm disc}. One can easily check that  the asymptotic formula of $\Phrand (\rt)$ in \cite[Theorem 2.1]{Le6} holds also for $G(T)$ satisfying \eqref{GT range 1}. Hence, we obtain the following corollary. 
\begin{cor}\label{thm PhiT asymp}
Assume \eqref{GT range 1} for some $ 0< \theta <  1$ and assumptions A1--A6 for $L_1, \ldots, L_J $. 
Then there exist  constants $\epsilon_1, \epsilon_2 > 0$ and a sequence  $ \{ b_{\kb, \lb}\}$ of real numbers such that
\begin{equation}\begin{split} \label{eqn PhiT asymp}
   \Phi_T  ( \rt )     =  &     \sum_{   \Kcal( \kb+\lb) \leq \epsilon_1 \log\log T  }       b_{\kb, \lb}              \prod_{j=1}^J     \frac{1}{  \sqrt{\psi_{j,T} }^{k_j + \ell_j  }   }   \\
   &  \times  \prod_{j=1}^J \bigg(  \int_{a_j}^{b_j}  e^{ - \pi u^2 } \Hcal_{k_j } ( \sqrt{\pi}u )du      \int_{c_j}^{d_j}  e^{ - \pi v^2 } \Hcal_{\ell_j } ( \sqrt{\pi}v )dv     \bigg)    \\
&      + O \bigg(   \frac{1}{ ( \log T)^{\epsilon_2}} + \frac{\sqrt{G(T)} \log \log T }{ \sqrt{  \log T}}  \bigg) ,
 \end{split}\end{equation}
where $ \kb = ( k_1 , \ldots, k_J) $ and $ \lb = ( \ell_1 , \ldots, \ell_J)$ are vectors in $(\mathbb{Z}_{\geq 0})^J $, $\Kcal(\kb  ) := k_1  + \cdots +k_J   $ and 
$$\Hcal_n (x) := (-1)^n e^{x^2} \frac{ d^n}{dx^n} e^{- x^2}$$
 is the $n$-th Hermite polynomial.  
Moreover, $b_{0,0}= 1 $, $ b_{\kb, \lb} = 0 $ if $\Kcal(\kb + \lb)  = 1$ and $b_{\kb + \lb } = O(  \delta_0^{-\Kcal(\kb+\lb)})$ for some $ \delta_0 > 0$ and all $ \kb, \lb \in (\mathbb{Z}_{\geq 0})^J $. 
\end{cor}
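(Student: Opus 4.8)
The strategy is to combine the discrepancy bound of Theorem \ref{thm disc} with the already-known asymptotic expansion for the random model, so that the entire computation reduces to the probabilistic quantity $\Phrand(\rt)$, which no longer involves the family $\{L_j(\sigma_T+it)\}$ directly. In effect the analytic content has been packaged into Theorem \ref{thm disc}, and what remains is a statement purely about the random Euler products.

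First I would observe that, by construction, $\rt$ is a single rectangular box in $\mathbb{R}^{2J}$ with sides parallel to the coordinate axes: its projections onto the $\log|L_j|$-axes are the intervals $[a_j\sqrt{\pi\psi_{j,T}},\, b_j\sqrt{\pi\psi_{j,T}}]$ and onto the $\arg L_j$-axes the intervals $[c_j\sqrt{\pi\psi_{j,T}},\, d_j\sqrt{\pi\psi_{j,T}}]$. Hence $\rt$ is admissible in the supremum defining $\D(\sigma_T)$. Next I would check that the range \eqref{GT range 1} assumed here is contained in the range $\log\log T\leq G(T)\leq \frac{\log T}{(\log\log T)^2}$ required by Theorem \ref{thm disc}: for large $T$ one has $(\log T)^\theta\geq \log\log T$, and the upper bounds coincide, so Theorem \ref{thm disc} applies. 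Taking $\R=\rt$ gives
$$ \Phi_T(\rt)=\Phrand(\rt)+O\left(\frac{\sqrt{G(T)}\log\log T}{\sqrt{\log T}}\right), $$
which already produces the second error term in \eqref{eqn PhiT asymp}.

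It then remains to feed in the asymptotic expansion of $\Phrand(\rt)$ from \cite[Theorem 2.1]{Le6}. That result supplies exactly the main term of \eqref{eqn PhiT asymp}, namely the finite sum over $\kb,\lb$ with $\Kcal(\kb+\lb)\leq \epsilon_1\log\log T$ of the coefficients $b_{\kb,\lb}$ times the products of Hermite-weighted Gaussian integrals, with an error of size $O(1/(\log T)^{\epsilon_2})$; it also yields the stated normalizations $b_{0,0}=1$, the vanishing of $b_{\kb,\lb}$ when $\Kcal(\kb+\lb)=1$, and the bound $b_{\kb,\lb}=O(\delta_0^{-\Kcal(\kb+\lb)})$. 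Here I would note that \eqref{GT range 1} forces $\log G(T)\asymp \log\log T$, so the normalizing quantities $\psi_{j,T}=\xi_j\log G(T)\asymp \log\log T$ are of the expected order; this is precisely what makes the Gaussian-plus-Edgeworth shape of the expansion, with density $e^{-\pi u^2}$ and Hermite corrections $\Hcal_{k_j}(\sqrt{\pi}u)$, emerge. Substituting this expansion into the display above and collecting the two error terms yields \eqref{eqn PhiT asymp}.

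The only genuine work lies in verifying that \cite[Theorem 2.1]{Le6}, which was established for a narrower window of $G(T)$, continues to hold throughout the extended range \eqref{GT range 1}. This is a purely probabilistic statement about the random model $L_j(\sigma_T,\X)$ and is independent of any analytic input on the $L$-functions themselves; I expect it to follow by re-examining the Fourier-analytic Edgeworth argument in \cite{Le6} and confirming that each estimate there survives when $G(T)$ is pushed up to $\frac{\log T}{(\log\log T)^2}$. The quantity to control is the growth of the variances and higher cumulants of $\Lv(\sigma_T,\X)$, which must be checked to remain comparable to $\log\log T$ on this range so that the expansion of the characteristic function and the error bound $O(1/(\log T)^{\epsilon_2})$ are preserved. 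This verification is the main obstacle, albeit a routine one; the reduction via Theorem \ref{thm disc} is immediate once the box structure of $\rt$ and the inclusion of ranges are noted.
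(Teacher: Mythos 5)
Your proposal matches the paper's own argument: the paper likewise applies Theorem \ref{thm disc} with $\R=\rt$ to replace $\Phi_T(\rt)$ by $\Phrand(\rt)$ at the cost of the second error term, and then invokes the expansion of $\Phrand(\rt)$ from \cite[Theorem 2.1]{Le6}, remarking (as you do) that one only needs to check that this expansion persists for $G(T)$ in the range \eqref{GT range 1}. The approach and the division of labor are essentially identical.
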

Note that Corollary \ref{thm PhiT asymp} extends the asymptotic expansion for $  \zeta (s) $ in \cite[Theorem 1.2]{Le5} and the asymptotic expansion for $\Lv (s)$ in \cite[Theorem 1.2]{Le6}.
 If $G(T) $ is very close to $ \frac{ \log T}{ ( \log \log T )^2 }$, the error term in \eqref{eqn PhiT asymp} is large so that  we have an approximation by a shorter sum as follows.
\begin{cor}
Under the same assumptions as in Corollary \ref{thm PhiT asymp} except for 
$$G(T) = \frac{ \log T}{ ( \log \log T)^{ 2+g}}  $$ 
with a constant $ g > 0 $,  we have
\begin{align*}
   \Phi_T  ( \rt )     =  &     \sum_{   \Kcal( \kb+\lb) < g  }       b_{\kb, \lb}              \prod_{j=1}^J     \frac{1}{  \sqrt{\psi_{j,T} }^{k_j + \ell_j  }   }   \\
   &  \times  \prod_{j=1}^J \bigg(  \int_{a_j}^{b_j}  e^{ - \pi u^2 } \Hcal_{k_j } ( \sqrt{\pi}u )du      \int_{c_j}^{d_j}  e^{ - \pi v^2 } \Hcal_{\ell_j } ( \sqrt{\pi}v )dv     \bigg)     + O \bigg(     \frac{1}{  ( \log \log T )^{ \frac{g}{2}  }}  \bigg) .
 \end{align*}
\end{cor}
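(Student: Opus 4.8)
The plan is to derive this corollary directly from Corollary \ref{thm PhiT asymp} by inserting the specific choice $G(T) = \frac{\log T}{(\log\log T)^{2+g}}$ and then shortening the main sum. First I would check that this $G(T)$ satisfies \eqref{GT range 1}, so that \eqref{eqn PhiT asymp} is available: indeed $G(T) \leq \frac{\log T}{(\log\log T)^2}$ because $g>0$, while $G(T) = (\log T)^{1 - (2+g)\log\log\log T / \log\log T} \geq (\log T)^\theta$ for any fixed $\theta<1$ once $T$ is large. Substituting this $G(T)$ into the error term of \eqref{eqn PhiT asymp} gives
$$\frac{\sqrt{G(T)}\,\log\log T}{\sqrt{\log T}} = \frac{\log\log T}{(\log\log T)^{1+g/2}} = \frac{1}{(\log\log T)^{g/2}},$$
and since $(\log T)^{-\epsilon_2} = o\big((\log\log T)^{-g/2}\big)$ for fixed $g,\epsilon_2>0$, the total error becomes $O\big((\log\log T)^{-g/2}\big)$, exactly the error claimed.

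It then remains to show that the terms of the main sum in \eqref{eqn PhiT asymp} with $\Kcal(\kb+\lb)\geq g$ contribute at most $O\big((\log\log T)^{-g/2}\big)$, so they may be absorbed into the error and the summation cut down to $\Kcal(\kb+\lb)<g$. Here I would combine three ingredients: the coefficient bound $b_{\kb,\lb} = O(\delta_0^{-\Kcal(\kb+\lb)})$ from Corollary \ref{thm PhiT asymp}; the estimate $\psi_{j,T} = \xi_j \log G(T) = \xi_j \log\log T\,(1+o(1)) \geq c_0 \log\log T$ for each $j$ (using $\xi_j>0$ from assumption A6); and a bound for the Hermite integrals. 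For the last, integrating the Rodrigues-type identity $e^{-x^2}\Hcal_n(x) = -\frac{d}{dx}\big(e^{-x^2}\Hcal_{n-1}(x)\big)$ yields
$$\int_{a_j}^{b_j} e^{-\pi u^2}\Hcal_{k_j}(\sqrt{\pi}u)\,du = \frac{1}{\sqrt{\pi}}\Big(e^{-\pi a_j^2}\Hcal_{k_j-1}(\sqrt{\pi}a_j) - e^{-\pi b_j^2}\Hcal_{k_j-1}(\sqrt{\pi}b_j)\Big),$$
which with Cram\'er's inequality $|\Hcal_n(x)|e^{-x^2/2}\ll 2^{n/2}\sqrt{n!}$ gives the bound $\ll 2^{k_j/2}\sqrt{k_j!}$, uniformly in the fixed endpoints.

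Combining these, a single term with $\Kcal(\kb+\lb)=n$ is bounded by
$$\delta_0^{-n}\prod_{j=1}^J \frac{2^{(k_j+\ell_j)/2}\sqrt{k_j!\,\ell_j!}}{\psi_{j,T}^{(k_j+\ell_j)/2}} \ll \Big(\frac{2}{c_0\log\log T}\Big)^{n/2}\delta_0^{-n}\sqrt{n!} \ll \Big(\frac{Cn}{\log\log T}\Big)^{n/2},$$
where I used $\prod_j k_j!\,\ell_j! \leq n!$, $n!\leq n^n$, $\psi_{j,T}\geq c_0\log\log T$, and let $C$ absorb the constants. Since there are $\ll n^{2J-1}$ pairs $(\kb,\lb)$ with $\Kcal(\kb+\lb)=n$, the discarded tail is
$$\ll \sum_{g\leq n\leq \epsilon_1\log\log T} n^{2J-1}\Big(\frac{Cn}{\log\log T}\Big)^{n/2},$$
and taking $\epsilon_1$ small enough that $C\epsilon_1 < e^{-1}$ (permissible, since \eqref{eqn PhiT asymp} remains valid for any smaller $\epsilon_1$) makes the summand geometrically decreasing throughout the range, so the sum is dominated by its first term $n=\lceil g\rceil$, giving $\ll (\log\log T)^{-\lceil g\rceil/2}\leq (\log\log T)^{-g/2}$. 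I expect the main obstacle to be exactly this uniform control: both $b_{\kb,\lb}$ and the Hermite integrals grow (like $\delta_0^{-n}$ and $\sqrt{n!}$), so the tail is tamed only because the decay $\psi_{j,T}^{-n/2}\asymp(\log\log T)^{-n/2}$ beats them, which forces the summation length $\epsilon_1\log\log T$ to be kept small relative to $\log\log T$ and requires handling the factorial growth through the combinatorial inequality $\prod_j k_j!\,\ell_j!\leq n!$.
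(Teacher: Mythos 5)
Your proposal is correct and is essentially the argument the paper intends (the paper states this corollary without a written proof, noting only that the enlarged error term permits truncating the sum): substitute the given $G(T)$ to see the error in \eqref{eqn PhiT asymp} becomes $(\log\log T)^{-g/2}$, then absorb every term with $\Kcal(\kb+\lb)=n\geq g$ into that error because it carries the factor $\prod_j\psi_{j,T}^{-(k_j+\ell_j)/2}\ll(\log\log T)^{-n/2}$. Your explicit control of the Hermite integrals, the coefficient growth $\delta_0^{-n}$, and the term count via $\prod_j k_j!\,\ell_j!\leq n!$ correctly fills in the uniformity needed to sum the discarded tail.
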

 
Note that an asymptotic expansion similar to \eqref{eqn PhiT asymp} was expected to hold in \cite{He1} without a proof.

\section{High moments of $ \log L$}\label{sec high moment}
Let $L$ be an $L$-function satisfying assumptions A1--A6 in this section. Here, we use $ \alpha_i (p)$ instead of $\alpha_{j,i}(p)$ in assumptions A1 and A4, and assumption A6 is simply
$$  \sum_{p \leq x} \frac{  | \beta_{L }(p)  |^2   }{p} =   \xi_L  \log \log x + c_L + O \bigg( \frac{1}{ \log x} \bigg)$$ 
for some constants $ \xi_L >0 $ and $ c_L \in \mathbb{R}$. Let $\sigma_T $ be defined in \eqref{def sigma T} and assume that 
\begin{equation}\label{GT range}
 ( \log T)^{\frac13} \leq G(T) \leq \frac{ \log T}{ ( \log \log T)^2 }  
 \end{equation}
 in this section. Then we need the following theorem to prove Theorem \ref{thm disc}.  
 \begin{thm}\label{thm 2nd moment}
Assume that   $ e^{ \frac{G(T)}{2}} \leq Y \leq      T^{ \varepsilon } $ with $ 0< \varepsilon < \min\{ \frac1{48},  \frac{\kappa}3  \} $.   Then there exists $ \kappa_0 > 0 $ such that
$$ \frac1T \int_T^{2T}  | \log L ( \sigma_T   + it ) -  R_{Y} ( \sigma_T + i t ) |^{2 } dt  \ll   e^{ - \kappa_0  \frac{ \log T}{G(T)}} + e^{ - 2 \frac{ \log Y}{ G(T)} }   \frac{ G(T)}{ \log Y}   ,
    $$
    where
    $$ R_Y ( s ) :=  \sum_{ p^r \leq Y}  \frac{  \beta_{L} ( p^r) }{p^{rs}}. $$
 \end{thm}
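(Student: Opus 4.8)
The plan is to avoid the pointwise approximation of \cite[Lemma 4.2]{LL}, which breaks down when $\sigma_T\to\frac12$, and instead to bound $\log L-R_Y$ directly in mean square, so that the nontrivial zeros of $L$ enter only through the zero-density hypothesis A5 \emph{after} integration in $t$, rather than through an exceptional set of large individual values.

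First I would record, from A1, the Dirichlet series $-\frac{L'}{L}(s)=\sum_n\Lambda_L(n)n^{-s}$ for $\re(s)>1$, where $\Lambda_L$ is supported on prime powers with $\Lambda_L(p^r)=\beta_L(p^r)\log(p^r)$, so that $\beta_L(n)=\Lambda_L(n)/\log n$ and $R_Y(s)=\sum_{n\le Y}\frac{\Lambda_L(n)}{n^s\log n}$. Applying a truncated Perron formula to $-L'/L$ against a kernel of the form $Y^w/w^2$ and shifting the contour to the left of $s$, using the Hadamard factorization guaranteed by A2--A3, I obtain a Selberg-type identity
$$\log L(\sigma_T+it)=\sum_{n\le Y^3}\frac{\Lambda_L(n)}{n^{\sigma_T+it}\log n}\,w_Y(n)+E(\sigma_T+it),$$
where $w_Y$ is a smooth weight equal to $1$ on $n\le Y$ and supported on $n\le Y^3$, and $E(\sigma_T+it)\ll\frac{1}{\log Y}\sum_\rho\frac{Y^{\beta-\sigma_T}(\text{decay in }|t-\gamma|)}{|\sigma_T+it-\rho|^2}+(\text{negligible})$ is built from the nontrivial zeros $\rho=\beta+i\gamma$ and the possible pole at $s=1$. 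This reduces the theorem to two mean-square estimates: (i) for $P_Y(\sigma_T+it):=R_Y(\sigma_T+it)-\sum_{n\le Y^3}\frac{\Lambda_L(n)}{n^{\sigma_T+it}\log n}w_Y(n)$, the gap between the sharp and the smoothed truncation; and (ii) for the zero term $E$.

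For (i), by construction $P_Y$ is a Dirichlet polynomial supported on $Y<n\le Y^3$ with coefficients bounded by $|\beta_L(n)|$, of length $\le Y^3\le T^{3\varepsilon}=o(T)$. The Montgomery--Vaughan mean value theorem gives $\frac1T\int_T^{2T}|P_Y(\sigma_T+it)|^2\,dt\ll\sum_{Y<n\le Y^3}\frac{|\beta_L(n)|^2}{n^{2\sigma_T}}$, the off-diagonal being absorbed since the length stays below $T$ (with room to spare under $\varepsilon<\frac1{48}$). Estimating the prime-power terms with $r\ge2$ by A4 and the prime terms by partial summation against A6, and using $2\sigma_T=1+2/G(T)$, this is $\ll\sum_{p>Y}p^{-2\sigma_T}\ll e^{-2\log Y/G(T)}\frac{G(T)}{\log Y}$, which is exactly the second term of the claimed bound; the hypothesis $Y\ge e^{G(T)/2}$ keeps $G(T)/\log Y\le 2$ here.

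Step (ii) is the main obstacle, and it is where the closeness of $\sigma_T$ to $\frac12$ bites. Integrating $|E(\sigma_T+it)|^2$ over $[T,2T]$ and opening the square reduces, after the $t$-integration, to a weighted sum over pairs of zeros dominated by its diagonal $\frac{1}{T(\log Y)^2}\sum_\rho Y^{2(\beta-\sigma_T)}I(\rho)$, where $I(\rho)=\int_T^{2T}|\sigma_T+it-\rho|^{-4}\,dt$. I split the zeros at $\beta=\sigma_T$: those with $\beta\le\sigma_T$ have $Y^{2(\beta-\sigma_T)}\le1$ and feed back into the convergent tail already controlled in (i), while the zeros with $\beta>\sigma_T$ are the genuinely dangerous ones. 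Writing their contribution as a Stieltjes integral against $N_L(\sigma,2T)$ and integrating by parts, the bulk integrand is proportional to $Y^{2\sigma}T^{-\kappa\sigma}=\exp\bigl(\sigma(2\log Y-\kappa\log T)\bigr)$, and the hypothesis $Y\le T^\varepsilon$ with $\varepsilon<\frac{\kappa}{3}$ forces the slope $2\log Y-\kappa\log T$ to be negative, so the integrand decreases in $\sigma$ and the mass concentrates at the endpoint $\sigma=\sigma_T$. There A5 yields $\ll\frac1T(\log T)^2\,T^{1-\kappa(\sigma_T-1/2)}=(\log T)^2\,T^{-\kappa/G(T)}$, and since $G(T)\le\log T/(\log\log T)^2$ the power saving swallows the logarithmic loss, producing $\ll e^{-\kappa_0\log T/G(T)}$ for any fixed $\kappa_0<\kappa$. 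The most delicate point is the behaviour of $I(\rho)$ for zeros with $\beta$ just above $\sigma_T$, where $|\sigma_T-\beta|$ is small: here one must exploit the smoothing built into the Selberg identity and the finite length of the $t$-interval to keep $\sum_\rho Y^{2(\beta-\sigma_T)}I(\rho)$ integrable across $\beta=\sigma_T$, rather than relying on the crude pointwise bound $I(\rho)\ll|\sigma_T-\beta|^{-3}$. Combining (i) and (ii) gives the theorem.
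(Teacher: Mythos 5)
Your step (i) is exactly the paper's treatment of the truncation from length $T^{O(\varepsilon)}$ down to $Y$ (the mean value theorem applied to the tail of the Dirichlet polynomial, giving the term $e^{-2\log Y/G(T)}G(T)/\log Y$), and your general philosophy of bounding in mean square so that A5 enters only after $t$-integration is also the paper's. The gap is in step (ii), and it is fatal as written. You split the zeros at $\beta=\sigma_T$ and assert that those with $\beta\le\sigma_T$ are harmless because $Y^{2(\beta-\sigma_T)}\le 1$, "feeding back into the convergent tail of (i)"; but the tail of the Dirichlet polynomial and the zero term are unrelated objects, and these are precisely the dominant zeros. There are $\asymp T\log T$ zeros with $\beta=\frac12$ (and A5 gives \emph{no} savings for zeros with $\beta-\frac12=O(1/G(T))$, since $N_L(\sigma,T)\ll T^{1-\kappa(\sigma-\frac12)}\log T$ is trivial there). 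Each such zero sits at distance exactly $\sigma_T-\frac12=1/G(T)$ from the line of integration, so $I(\rho)\asymp G(T)^3$ while $Y^{2(\beta-\sigma_T)}=Y^{-2/G(T)}$ is only $e^{-1}$ when $Y=e^{G(T)/2}$. Their diagonal contribution to $\frac{1}{T(\log Y)^2}\sum_\rho Y^{2(\beta-\sigma_T)}I(\rho)$ is then of order $G(T)\log T$, vastly exceeding the claimed $O(1)$ bound at that endpoint; no Stieltjes-integration against $N_L(\sigma,2T)$ can repair this because the problem lives at $\sigma=\frac12$, not at $\sigma>\sigma_T$. Your closing remark that one must "exploit the smoothing to keep the sum integrable across $\beta=\sigma_T$" identifies the right neighborhood but not the right issue: the obstruction is not integrability across an isolated abscissa but the sheer number of critical zeros at distance $1/G(T)$.

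The paper's proof resolves exactly this point with the Selberg--Tsang moving abscissa: one first proves (Lemmas \ref{Ts Lemma 5.4} and \ref{Ts Lemma 5.5}) an identity for $\log L(s)$ with the Dirichlet polynomial evaluated at $\lambda_t+it$, where $\lambda_t=\max\{\sigma_{x,t},\sigma_T\}$ and $\sigma_{x,t}$ is pushed to the right only by zeros with $\beta-\frac12$ \emph{large} relative to $1/\log x$ and close to $t$. For the typical $t$ whose nearby zeros all have $\beta-\frac12\le\frac12(\sigma_T-\frac12)$, one gets $\lambda_t=\sigma_T$ and the zero term $\tilde L$ vanishes identically; the critical zeros therefore never enter. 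The set where $\lambda_t>\sigma_T$ is controlled by A5 through Lemma \ref{Ts Lemma 5.2}, the factor $(\lambda_t-\sigma_T)$ in Lemma \ref{Ts Lemma 5.5} supplies the decay, and comparing the polynomial at abscissas $\lambda_t$ and $\sigma_T$ costs another factor of $(\lambda_t-\sigma_T)$ times a derivative polynomial bounded by Lemma \ref{Ts Lemma 5.3}. This is the idea your proposal is missing; without it, or some substitute that neutralizes the zeros with $\beta-\frac12\ll 1/G(T)$, the fixed-abscissa explicit formula does not yield the theorem.
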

 To prove above theorem, we modify high moments estimations of $\log \zeta$ in Tsang's thesis \cite{Ts} and compute high moments of  $\log L$. All these computations are based on Selberg \cite{Sel1} and \cite{Sel2}. Since the Dirichlet coefficients of $L(s)$ are allowed to be larger than 1, Theorem \ref{thm 2nd moment} is not an immediate consequence of Tsang \cite{Ts}. We should bound various sums involving the Dirichlet coefficients of $\log L$ carefully using assumptions A4 and A6. As a result we obtain the following theorem.  
\begin{thm} \label{thm high moment}
  Let  $ k$ be a positive integer such that $ k \leq \frac{\varepsilon}{4} (  \log \log T )^2  $. Then there exist  $\kappa_0, c  >0$ such that
  \begin{equation}\label{thm 2 eqn 1}
  \frac1T  \int_T^{2T}   | \log L ( \sigma_T  + it )  |^{2k} dt    \ll      c^k k^{4k}  e^{ - \kappa_0  \frac{ \log T}{ G(T)}}   +   c^k  k^k   ( \log G(T))^k      
\end{equation}
and
\begin{equation}\label{thm 2 eqn 2}
 \E [  | \log L ( \sigma_T  , \X  )  |^{2k} ] \ll c^k k^k ( \log G(T) )^k .
 \end{equation}

 \end{thm}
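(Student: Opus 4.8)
The plan is to establish both \eqref{thm 2 eqn 1} and \eqref{thm 2 eqn 2} by reducing each to a diagonal moment computation, following Selberg's method in the form developed by Tsang \cite{Ts}. For the integral bound \eqref{thm 2 eqn 1} the starting point is a Selberg-type explicit formula: for $t\in[T,2T]$ and a truncation level $x$ to be chosen with $\log x\asymp G(T)$, one writes
$$\log L(\sigma_T+it)=P(t)+Z(t)+E(t),$$
where $P(t)=\sum_{p^r\le x}\frac{\beta_L(p^r)\,w(p^r)}{p^{r(\sigma_T+it)}}$ is a Dirichlet polynomial over prime powers carrying a smooth weight $w$, the term $Z(t)$ collects the contribution of the zeros $\rho=\beta+i\gamma$ of $L$ with $|\gamma-t|$ small, and $E(t)$ is a remainder. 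Such a formula follows from A1--A3 by contour integration of $L'/L$. Since $\|\log L\|_{2k}\le\|P\|_{2k}+\|Z\|_{2k}+\|E\|_{2k}$ on $[T,2T]$ by Minkowski's inequality, it suffices to bound the $2k$-th mean of each piece separately.

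First I would treat $\frac1T\int_T^{2T}|P(t)|^{2k}\,dt$. Expanding $P^k\overline{P}^k$ and integrating in $t$, the off-diagonal frequencies are separated and contribute negligibly provided $x^{2k}\le T$; with $\log x\asymp G(T)$ this holds because the hypothesis $k\le\frac{\varepsilon}{4}(\log\log T)^2$ together with the range \eqref{GT range} gives $2k\log x\ll(\log\log T)^2\,G(T)\ll\log T$. The diagonal contributes a combinatorial factor $\ll c^k k^k$ times $V^k$, where $V=\sum_{p^r\le x}\frac{|\beta_L(p^r)|^2}{p^{2r\sigma_T}}$. The key point, and the main departure from the classical case, is that although $|\beta_L(p^r)|$ may be as large as $p^{r\eta}$, assumption A6 yields $\sum_{p\le x}\frac{|\beta_L(p)|^2}{p}=\xi_L\log\log x+O(1)$; partial summation against the weight $p^{-2/G(T)}$ then shows the $r=1$ part of $V$ is $\asymp\log G(T)$, while the prime-power terms $r\ge2$ are controlled by A4 and contribute only $O(1)$. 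Hence $V\asymp\log G(T)$ and this piece gives the main term $\ll c^k k^k(\log G(T))^k$.

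Next I would bound the $2k$-th mean of the zero term $Z$. By the zero-density hypothesis A5 with $\sigma-\frac12=1/G(T)$, the set of $t\in[T,2T]$ for which $L$ has a zero with $\beta>\sigma_T$ within bounded distance of $t$ has measure $\ll T\,e^{-\kappa_0\log T/G(T)}$. On this exceptional set $Z(t)$ is a short sum of logarithmic singularities $\log|\sigma_T+it-\rho|$, and integrating $|Z|^{2k}$ across such a singularity costs a factor $\Gamma(2k+1)=(2k)!\ll c^k k^{2k}$; combined with a second such factor from the pairing of two nearby zeros, and with A4 and A6 bounding the number and size of the contributing zeros, this yields a bound of the shape $\ll c^k k^{4k}\,e^{-\kappa_0\log T/G(T)}$. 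The remainder $E(t)$ is a negative power of $x$ times a power of $\log T$, hence negligible after raising to the $2k$-th power. Adding the three contributions gives \eqref{thm 2 eqn 1}.

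For the random model \eqref{thm 2 eqn 2}, write $\log L(\sigma_T,\X)=\sum_p Y_p$ with $Y_p=\sum_{r\ge1}\frac{\beta_L(p^r)\X(p)^r}{p^{r\sigma_T}}$. Since the $\X(p)$ are independent and $\E[\X(p)^r]=0$ for $r\ge1$, the $Y_p$ are independent and mean zero, with $\sum_p\E|Y_p|^2=V\asymp\log G(T)$ exactly as above. Expanding $\E\big[|\sum_p Y_p|^{2k}\big]$ and using independence, only those terms survive in which the prime-power content of the holomorphic and anti-holomorphic factors matches at every prime; this forces the same diagonal as before and gives the clean bound $\ll c^k k^k V^k\ll c^k k^k(\log G(T))^k$, with no exceptional set and hence no $k^{4k}$ loss. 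The hardest step is the zero contribution in the third paragraph: unlike Tsang's treatment of $\zeta$, here the coefficients $\beta_L(p^r)$ are not bounded by $1$, so every sum over primes and prime powers entering both the evaluation of $V$ and the estimate for $Z$ must be re-derived from A4 and A6 rather than from the prime number theorem, and one must check that the $k$-dependent thresholds keep the off-diagonal and singularity losses inside the stated factor $c^k k^{4k}$.
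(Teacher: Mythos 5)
Your overall architecture is the right one --- a Selberg--Tsang explicit formula, the diagonal moment of the Dirichlet polynomial giving $c^k k^k V^k$ with $V=\sum_p|\beta_L(p)|^2p^{-2\sigma_T}\ll\log G(T)$ via A6 and partial summation (and $O(1)$ from the $r\ge2$ terms via A4), the zero-density input A5 for the exceptional contribution, and independence for the random model --- and this matches the paper, which deduces \eqref{thm 2 eqn 1} from \eqref{eqn high moment beta sum} and \eqref{proof thm2 eqn6}. However, one step fails concretely: the truncation $\log x\asymp G(T)$. In any Selberg-type formula the error of replacing $\log L$ by the weighted polynomial carries a factor of the shape $x^{-\frac12(\lambda_t-\frac12)}\log T/\log x$ (see Lemma \ref{Ts Lemma 5.4}); with $\log x\asymp G(T)$ and $\sigma_T-\frac12=1/G(T)$ one has $x^{-(\sigma_T-\frac12)}\asymp1$, so your ``remainder $E(t)$'' is of size $\log T/G(T)\ge(\log\log T)^2$, and its $2k$-th power is nowhere near the claimed error. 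The paper instead takes $x=T^{\varepsilon/k}$, so that $\sigma_T-\frac12\ge4/\log x$ and this factor becomes $e^{-\varepsilon\log T/(2kG(T))}$, whose $2k$-th power is $e^{-\varepsilon\log T/G(T)}$; a short polynomial of length $Y$ with $\log Y\ge G(T)/2$ only enters afterwards (in Theorem \ref{thm 2nd moment}) through a separate tail estimate \eqref{proof thm2 eqn12}. The main term is unaffected by this correction, since $V\ll\log G(T)$ holds for the long polynomial as well.

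The second gap is the zero term. The accounting ``one logarithmic singularity costs $\Gamma(2k+1)$, times a second such factor, times the measure from A5'' does not engage with the fact that $\asymp\log T$ zeros lie within distance $O(1/\log T)$ of a typical $t$, so $Z(t)$ is not a bounded collection of isolated singularities and the product structure you invoke is not available. The paper's mechanism is different: it replaces $\sigma_T$ by the $t$-dependent abscissa $\lambda_t=\max\{\sigma_{x,t},\sigma_T\}$, bounds $\re\tilde L$ and $\im\tilde L$ by $(\lambda_t-\sigma_T)$ times a polynomial moment plus $\log^+\frac{1}{\eta_t\log x}$ (Lemmas \ref{Ts Lemma 5.4} and \ref{Ts Lemma 5.5}), and the factor $k^{4k}$ arises from the $(c\nu)^\nu$ with $\nu\asymp k$ in the zero-density sum of Lemma \ref{Ts Lemma 5.1} together with the $m^{4m}$ in \eqref{proof thm2 eqn4}, not from $\Gamma(2k+1)^2$. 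Without this device (or an equivalent substitute) your third paragraph asserts the conclusion rather than proving it. Your treatment of the random model \eqref{thm 2 eqn 2} is essentially the paper's and is fine, modulo the minor point that the powers $\X(p)^r$ for different $r$ are not independent, which the paper handles by splitting off $r=1$, $r=2$ and $r\ge3$ before applying the moment bound.
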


 By  Theorem \ref{thm high moment} with $ k= \log \log T$  one can easily derive the following corollary, which is necessary in Section \ref{sec disc}.
\begin{cor}\label{cor large dev}
Given constant $ A_1 >0$, there exists a constant $A_2 >0$ such that
 $$\frac{1}{T}  \me \{ t \in [T,2T] :   | \log L ( \sigma_T + it ) | \geq A_2 \log \log T \} \ll  ( \log T)^{ - A_1 }  $$
 and 
 $$ \P (    | \log L ( \sigma_T , \X  ) | \geq A_2 \log \log T  ) \ll ( \log T)^{ - A_1 }.  $$
\end{cor}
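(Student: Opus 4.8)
The plan is to deduce both estimates from the high-moment bounds of Theorem~\ref{thm high moment} by a single application of Markov's inequality, with the pivotal choice $k=\lfloor\log\log T\rfloor$. First I would check that this $k$ is admissible: since $k\le\log\log T\le\frac{\varepsilon}{4}(\log\log T)^2$ for all large $T$, the hypothesis of Theorem~\ref{thm high moment} is met. I would also use the trivial bound $\log G(T)\le\log\log T$, valid throughout the range \eqref{GT range} because $G(T)\le\log T$.

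For the $t$-average, Markov's inequality at the $2k$-th moment gives
$$\frac1T\me\{t\in[T,2T]:|\log L(\sigma_T+it)|\ge A_2\log\log T\}\le\frac{1}{(A_2\log\log T)^{2k}}\cdot\frac1T\int_T^{2T}|\log L(\sigma_T+it)|^{2k}\,dt,$$
into which I substitute \eqref{thm 2 eqn 1}, splitting the work into two terms. For the main (second) term, the choice $k=\log\log T$ makes $k^k(\log G(T))^k\le(\log\log T)^{2k}$ cancel exactly one power of $(\log\log T)^{2k}$ in the denominator, leaving
$$\frac{c^k k^k(\log G(T))^k}{(A_2\log\log T)^{2k}}\ll\Bigl(\frac{c}{A_2^2}\Bigr)^{\log\log T}.$$
Choosing $A_2$ large enough that $c/A_2^2\le e^{-A_1}$ bounds this by $(\log T)^{-A_1}$.

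The first term is where the zero-density input hides, but it is harmless thanks to the exponential factor. In the range \eqref{GT range} one has $\log T/G(T)\ge(\log\log T)^2$, so $e^{-\kappa_0\log T/G(T)}\le e^{-\kappa_0(\log\log T)^2}$, whereas the polynomial losses give $c^k k^{4k}/(A_2\log\log T)^{2k}=\bigl(c(\log\log T)^2/A_2^2\bigr)^{\log\log T}=e^{O(\log\log T\,\log\log\log T)}$. The quadratic gain therefore swamps the subquadratic loss, so the first term is at most $e^{-\frac{\kappa_0}{2}(\log\log T)^2}$, which is far below $(\log T)^{-A_1}=e^{-A_1\log\log T}$. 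Summing the two terms yields the first assertion.

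The probabilistic statement is the same computation with \eqref{thm 2 eqn 2} in place of \eqref{thm 2 eqn 1}: Markov's inequality reproduces exactly the main-term estimate above, and there is no exceptional term to dispose of, so the same $A_2$ gives $\P(|\log L(\sigma_T,\X)|\ge A_2\log\log T)\ll(\log T)^{-A_1}$. The one point requiring attention — not really an obstacle — is the two-sided constraint on $k$: it must be small enough for Theorem~\ref{thm high moment} to apply yet large enough that the exponential in the first term beats the $k^{4k}$ loss; both hold with room to spare at $k=\log\log T$, which is why the corollary follows so directly.
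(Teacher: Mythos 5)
Your proposal is correct and matches the paper's intended argument exactly: the paper derives the corollary from Theorem~\ref{thm high moment} with $k=\log\log T$ via Markov's inequality, which is precisely what you do, including the verification that the zero-density term $c^k k^{4k}e^{-\kappa_0\log T/G(T)}$ is negligible because $\log T/G(T)\ge(\log\log T)^2$ in the range \eqref{GT range}. The details you supply (admissibility of $k$, the bound $\log G(T)\le\log\log T$, and the choice $c/A_2^2\le e^{-A_1}$) are all sound.
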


We provide lemmas in Section \ref{sec lemmas moment} and then prove Theorems \ref{thm 2nd moment} and \ref{thm high moment}   in Section \ref{sec proof moment}

\subsection{Lemmas}\label{sec lemmas moment}
We adapt estimations in \cite[Chapter 5]{Ts} for $ \log L$. First we restate \cite[Lemma 5.1]{Ts} without a proof. 
\begin{lem}\label{Ts Lemma 5.1}
Let $ 3 \leq X \leq T^{\kappa - \kappa'} $ for $ 0 < \kappa' < \kappa $ and let $ \nu \geq 0$. Then we have
$$ \sum_{  \substack{ \beta > \sigma \\ T \leq \gamma \leq 2 T}} ( \beta - \sigma)^\nu X^{ \beta- \sigma} = O \big(   T^{1 - \kappa (\sigma   -  \frac12 ) }  ( \log T)^{1 - \nu} (c\nu)^\nu  \big) $$
for $ \frac12 \leq \sigma \leq  1 $ and some $ c>  0 $, where $  \beta+i\gamma$ denotes a zero of $L(s)$.
\end{lem}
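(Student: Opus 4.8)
The plan is to express the sum over zeros as a Stieltjes integral against the zero‑counting function and then feed in the zero density hypothesis A5. Write $\rho = \beta + i\gamma$ for the nontrivial zeros of $L$, and note that by the Euler product A1 one has $L(s) \neq 0$ for $\re(s) > 1$, so every zero satisfies $\beta \le 1$ and the sum ranges over $\sigma < \beta \le 1$. Let $N(\alpha)$ denote the number of zeros with $\beta \ge \alpha$ and $T \le \gamma \le 2T$; this is a nonincreasing step function, and putting $g(\alpha) := (\alpha - \sigma)^\nu X^{\alpha - \sigma}$ the quantity in question is
$$ S := \sum_{\substack{\beta > \sigma \\ T \le \gamma \le 2T}} g(\beta) = -\int_{\sigma^+}^{1} g(\alpha)\, dN(\alpha). $$
First I would integrate by parts. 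The boundary term at $\alpha = 1$ vanishes because $N(1^+) = 0$, and at the lower end it vanishes too when $\nu > 0$ since $g(\sigma) = 0$, while for $\nu = 0$ it contributes $g(\sigma) N(\sigma^+) = N(\sigma^+)$, which is harmless; in all cases one is left with $S \ll N(\sigma^+) + \int_\sigma^1 N(\alpha)\, g'(\alpha)\, d\alpha$.

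Next I would insert A5. Over the window $T \le \gamma \le 2T$ one has $N(\alpha) \le N_L(\alpha, 2T) \ll T^{1 - \kappa(\alpha - \frac12)}\log T = T^{1 - \kappa(\sigma - \frac12)}\, T^{-\kappa(\alpha - \sigma)}\log T$, so the factor $T^{1 - \kappa(\sigma - \frac12)}\log T$ pulls out of the integral. Writing $u = \alpha - \sigma$ and computing $g'(\alpha) = X^{u}\, u^{\nu - 1}(\nu + u\log X)$, the remaining integral becomes, after extending to $u \in (0,\infty)$,
$$ \int_0^\infty e^{-\kappa u \log T}\, X^{u}\, u^{\nu - 1}(\nu + u\log X)\, du. $$
The crucial point is that the hypothesis $X \le T^{\kappa - \kappa'}$ forces $e^{-\kappa u \log T} X^u = e^{u(\log X - \kappa \log T)} \le e^{-\kappa' u \log T}$, so the integrand decays exponentially and extending the range of integration to infinity costs only a constant. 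This is exactly the step where the gap $\kappa'$ between $X$ and $T^\kappa$ is used.

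Finally I would evaluate the two resulting Gamma integrals. By $\int_0^\infty e^{-a u} u^{s - 1}\, du = \Gamma(s) a^{-s}$ with $a = \kappa' \log T$, the term $\nu \int u^{\nu-1}$ gives $\nu \Gamma(\nu)(\kappa'\log T)^{-\nu} = \Gamma(\nu+1)(\kappa'\log T)^{-\nu}$, while the term $\log X \int u^{\nu}$ gives $\log X\,\Gamma(\nu+1)(\kappa'\log T)^{-\nu - 1}$, which by $\log X \le (\kappa - \kappa')\log T$ is again $\ll \Gamma(\nu+1)(\kappa'\log T)^{-\nu}$; the identity $\nu\Gamma(\nu) = \Gamma(\nu+1)$ simultaneously disposes of the apparent singularity at $\nu = 0$. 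Collecting everything,
$$ S \ll T^{1 - \kappa(\sigma - \frac12)}\log T \cdot \Gamma(\nu+1)(\kappa'\log T)^{-\nu}, $$
and Stirling's bound $\Gamma(\nu + 1) \ll (c\nu)^\nu$, absorbing the subexponential factor and the constant $(\kappa')^{-\nu}$ into a single $c$, turns this into $T^{1-\kappa(\sigma - \frac12)}(\log T)^{1-\nu}(c\nu)^\nu$, as claimed. I expect the only delicate points to be the bookkeeping of the boundary terms and the $\nu = 0$ case in the integration by parts, together with making Stirling's estimate uniform down to small $\nu$; the analytic heart of the argument is simply the exponential decay produced by the $X \le T^{\kappa - \kappa'}$ constraint.
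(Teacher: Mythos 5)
Your argument is correct. The paper states this lemma without proof, as a direct restatement of Lemma 5.1 of Tsang's thesis (transported to $L$ via the zero-density assumption A5), and your partial summation of $(\beta-\sigma)^{\nu}X^{\beta-\sigma}$ against the zero-counting function, followed by the Gamma-integral evaluation using $\log X\le(\kappa-\kappa')\log T$, is exactly the standard proof; the one point that needs care --- that the boundary term $N(\sigma^{+})$ may only be retained when $\nu=0$, since for $\nu>0$ it would exceed the target bound $(\log T)^{1-\nu}(c\nu)^{\nu}$ --- you handle correctly by observing that $g(\sigma)=0$ in that case.
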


 Define
$$ \sigma_{x, t} := \frac12 + 2 \max  \bigg\{ \beta - \frac12 , \frac{2}{ \log x }  \bigg\}  $$
for  $ t \in [T, 2T]$, where the maximum is taken over all zeros $   \beta + i \gamma$ of $L(s)$ satisfying $ | t- \gamma | \leq \frac{ x^{ 3 ( \beta - \frac12)}}{ \log x } $ and $ \beta \geq \frac12$.
Then the following lemma corresponds to \cite[Lemma 5.2]{Ts}.
\begin{lem}\label{Ts Lemma 5.2}
Let $ \nu \geq 0$,  $ x = T^{ \varepsilon/k} $,  $3 \leq x^3 X^2 \leq T^{\kappa - \kappa' }$ for $ 0 < \kappa' < \kappa$. Then
\begin{align*}
\int_{\substack{  \sigma_{x,t} > \sigma \\ T  \leq t \leq 2T}} ( \sigma_{x,t} - \sigma)^\nu  X^{ \sigma_{x,t} - \sigma}  dt \ll  &   \frac{ (c\nu)^\nu k }{   ( \log T)^{ \nu}     }     T^{1 - \kappa (\sigma   -  \frac12 ) } x^{ \frac32 ( \sigma - \frac12)}
\end{align*}
for $    \frac12 + \frac{4}{ \log x}   \leq \sigma \leq 1 $ and
\begin{align*}
\int_{\substack{  \sigma_{x,t} > \sigma \\ T  \leq t \leq 2T}} ( \sigma_{x,t} - \sigma)^\nu  X^{ \sigma_{x,t} - \sigma}  dt \ll  &   \frac{ (c\nu)^\nu k }{   ( \log T)^{ \nu}     }     T^{1 - \kappa (\sigma   -  \frac12 ) }   +  T   \frac{c^{k+\nu} k^\nu }{ (\log T)^\nu }  
\end{align*}
for $  \frac12 \leq \sigma \leq  \frac12 + \frac{4}{ \log x}  $.
\end{lem}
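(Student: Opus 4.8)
The plan is to reduce the integral over $t$ to a sum over the nontrivial zeros of $L$ and then invoke Lemma~\ref{Ts Lemma 5.1}. The device is a partition of the region $\{t\in[T,2T]:\sigma_{x,t}>\sigma\}$ according to the zero \emph{responsible} for the value of $\sigma_{x,t}$: to each such $t$ I attach the zero $\rho=\beta+i\gamma$ (with $\beta\ge\frac12$ and $|t-\gamma|\le x^{3(\beta-1/2)}/\log x$) at which the maximum in the definition of $\sigma_{x,t}$ is attained. By construction $t$ then lies in the window of $\rho$, so the set of $t$ attached to a fixed $\rho$ has measure at most $2x^{3(\beta-1/2)}/\log x$, and on that set $\sigma_{x,t}=\frac12+2(\beta-\frac12)$ \emph{exactly}. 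Since $v\mapsto(v-\sigma)^\nu X^{v-\sigma}$ is increasing for $v>\sigma$, summing the contributions of the individual zeros gives an honest upper bound with no overcounting. (Zeros with $\gamma$ just outside $[T,2T]$ contribute windows of length at most $x^{3/2}/\log x$, which only enlarges the $\gamma$-range harmlessly.)

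Carrying out Case 1, where $\sigma\ge\frac12+\frac4{\log x}$: here $\sigma_{x,t}>\sigma$ forces the maximum to be attained at a genuine zero with $\beta-\frac12>\frac12(\sigma-\frac12)$, i.e.\ $\beta>\sigma':=\frac12+\frac12(\sigma-\frac12)$. The partition bound reads
\[
\int_{\substack{\sigma_{x,t}>\sigma\\ T\le t\le 2T}}(\sigma_{x,t}-\sigma)^\nu X^{\sigma_{x,t}-\sigma}\,dt\ \ll\ \frac1{\log x}\sum_{\substack{\beta>\sigma'\\ T\le\gamma\le 2T}}x^{3(\beta-\frac12)}\big(\tfrac12+2(\beta-\tfrac12)-\sigma\big)^\nu X^{\frac12+2(\beta-\frac12)-\sigma}.
\]
The key algebraic step is the recombination $x^{3(\beta-\frac12)}X^{2(\beta-\frac12)}=(x^3X^2)^{\beta-\frac12}$ together with $\frac12+2(\beta-\frac12)-\sigma=2(\beta-\sigma')$; pulling out the factors $X^{\frac12-\sigma}$ and $(x^3X^2)^{\sigma'-\frac12}$ and simplifying, the powers of $X$ cancel and leave exactly $x^{\frac32(\sigma-\frac12)}$. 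One is left with $\sum_{\beta>\sigma'}(\beta-\sigma')^\nu(x^3X^2)^{\beta-\sigma'}$, to which Lemma~\ref{Ts Lemma 5.1} applies with its $X$ taken to be $x^3X^2$ (legitimate precisely because of the hypothesis $3\le x^3X^2\le T^{\kappa-\kappa'}$) and with abscissa $\sigma'$. This produces the density saving $T^{1-\kappa(\sigma'-\frac12)}$, the power $(\log T)^{1-\nu}$, and the factor $(c\nu)^\nu$; recalling $\log x=\frac{\varepsilon}{k}\log T$ turns the prefactor $1/\log x$ into $k/(\varepsilon\log T)$, and the stated bound follows.

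For Case 2, where $\frac12\le\sigma\le\frac12+\frac4{\log x}$, the floor $\frac2{\log x}$ in the definition guarantees $\sigma_{x,t}\ge\frac12+\frac4{\log x}\ge\sigma$ for every $t$, so one integrates over essentially all of $[T,2T]$ and splits according to whether the maximum is attained at the floor or at a zero. On the floor set $\sigma_{x,t}=\frac12+\frac4{\log x}$, and since $\sigma\ge\frac12$ the integrand is at most $(4/\log x)^\nu X^{4/\log x}$; using $\log x=\frac{\varepsilon}{k}\log T$ and $X\le T^{(\kappa-\kappa')/2}$ one checks $X^{4/\log x}=O(c^k)$, so this part contributes $\ll T\,c^{k+\nu}k^\nu/(\log T)^\nu$, the second term in the statement. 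The remaining set, where the maximum is a zero with $\beta>\frac12+\frac2{\log x}$, is handled exactly as in Case 1 but with the threshold $\frac12+\frac4{\log x}$ in place of $\sigma$; the same recombination leaves a harmless factor $x^{\frac32(\sigma-\frac12)}=O(1)$, and Lemma~\ref{Ts Lemma 5.1} at an abscissa $O(1/\log x)$ above $\frac12$ yields the first term.

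I expect the main obstacle to be the bookkeeping in the recombination step and the consistent choice of abscissa: one must verify that the window weight $x^{3(\beta-1/2)}$ and the integrand weight $X^{\sigma_{x,t}-\sigma}$ fuse into a single clean power $(x^3X^2)^{\beta-\sigma'}$, so that the zero-density input of Lemma~\ref{Ts Lemma 5.1} can be applied at the shifted abscissa $\sigma'=\frac12+\frac12(\sigma-\frac12)$ (note $\sigma'-\frac12\asymp\sigma-\frac12$, which is what records the density saving in the statement). The secondary delicate point is the floor estimate in Case 2, where the gain $(\log T)^{-\nu}$ and the controlled exponential loss $X^{4/\log x}=O(c^k)$ must be extracted simultaneously; this is exactly where the constraints $x=T^{\varepsilon/k}$ and $x^3X^2\le T^{\kappa-\kappa'}$ are used.
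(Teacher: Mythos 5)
Your proposal is correct and follows essentially the same route as the paper: split $[T,2T]$ into the set where $\sigma_{x,t}$ equals the floor $\tfrac12+\tfrac4{\log x}$ (giving the $T\,c^{k+\nu}k^{\nu}(\log T)^{-\nu}$ term) and the set where it is governed by a zero, bound the latter by summing window lengths $2x^{3(\beta-1/2)}/\log x$ over zeros, recombine into $(x^3X^2)^{\beta-\sigma'}$ with $\sigma'=\tfrac12+\tfrac12(\sigma-\tfrac12)$, and apply Lemma~\ref{Ts Lemma 5.1}. All the delicate points you flag (the fusion of the window weight with $X^{\sigma_{x,t}-\sigma}$, the shifted abscissa, and the $X^{4/\log x}=O(c^k)$ estimate on the floor set) are handled exactly as in the paper.
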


\begin{proof}
Define two sets
\begin{align*}
 S_1 &= \left\{ t \in [T, 2T ] : \sigma_{x,t} > \max \left( \sigma ,  \frac12 + \frac{4}{ \log x}  \right) \right\} , \\
  S_2 &= \left\{ t \in [T, 2T ] : \sigma_{x,t} =  \frac12 + \frac{4}{ \log x} > \sigma    \right\} .
  \end{align*}
  Since $\sigma_{x,t} \geq  \frac12 + \frac{4}{ \log x}$, we see that
  $$ \int_{\substack{  \sigma_{x,t} > \sigma \\ T  \leq t \leq 2T}} ( \sigma_{x,t} - \sigma)^\nu  X^{ \sigma_{x,t} - \sigma}  dt  = \int_{S_1 } ( \sigma_{x,t} - \sigma)^\nu  X^{ \sigma_{x,t} - \sigma}  dt + \int_{S_2 } ( \sigma_{x,t} - \sigma)^\nu  X^{ \sigma_{x,t} - \sigma}  dt . $$

For $ t \in S_1 $, by the definition of $\sigma_{x,t}$ and $ \sigma_{x, t}>   \frac12 + \frac{4}{ \log x} $,   there exists  a zero $  \beta+ i \gamma$ such that  $\sigma_{x,t} =   2  \beta - \frac12  $, $ \beta - \frac12 > \frac{2}{ \log x}$ and $ | t- \gamma | \leq \frac{ x^{ 3 ( \beta - \frac12)}}{ \log x } $. Thus, we have
\begin{align*}
  \int_{S_1}   ( \sigma_{x,t} - \sigma)^\nu  X^{ \sigma_{x,t} - \sigma}  dt   
 & \leq \sum_{ \substack{ \beta > \frac12( \sigma + \frac12 )    \\     T/2\leq \gamma \leq 3T  }}  \int_{ \gamma - \frac{ x^{ 3 ( \beta - \frac12)}}{ \log x } }^{ \gamma + \frac{ x^{ 3 ( \beta - \frac12)}}{ \log x } }   
  \bigg(   2  \beta - \frac12  - \sigma \bigg)^\nu  X^{  2  \beta - \frac12   - \sigma}  dt \\
& \leq   \frac{ 2^{1+\nu} x^{ \frac32 ( \sigma - \frac12)}}{ \log x }    \sum_{ \substack{  \beta > \frac12( \sigma + \frac12 )    \\   T/2\leq \gamma \leq 3T  }}    \bigg(   \beta - \frac12 \bigg( \sigma + \frac12 \bigg)\bigg)^\nu  (x^3 X^2)^{   \beta - \frac12( \sigma + \frac12 )    }  .
\end{align*}
By Lemma \ref{Ts Lemma 5.1} the above is
\begin{equation}\label{Ts Lemma 5.2 bound 1}
\ll \frac{ (c\nu)^\nu k }{   ( \log T)^{ \nu}     }     T^{1 - \kappa (\sigma   -  \frac12 ) } x^{ \frac32 ( \sigma - \frac12)}
\end{equation}  
for some $c>0$.

We see that $S_2 = \emptyset$ for $ \sigma \geq  \frac12 + \frac{4}{ \log x }$. If $ \frac12 \leq  \sigma \leq   \frac12 + \frac{4}{ \log x }$, then 
$$   \int_{S_2 } ( \sigma_{x,t} - \sigma)^\nu  X^{ \sigma_{x,t} - \sigma}  dt \leq    T \bigg(  \frac{4}{ \log x} \bigg)^\nu  X^{ \frac{4}{ \log x}}   \leq    T   \frac{c^{k+\nu} k^\nu }{ (\log T)^\nu }  
$$
for some $c>0$.

\end{proof}

Next we consider \cite[Lemma 5.3]{Ts} and observe that the  condition (ii) therein does not hold in our setting. To adapt its proof to our setting, it requires several inequalities regarding $ \beta_L$. By assumptions A1 and A6 we have 
 \begin{equation}\label{eqn Ljpk}
 \beta_{L} (p^r ) =  \frac1r \sum_{i=1}^d \alpha_{i}(p)^r  .
 \end{equation}
From \eqref{eqn Ljpk} and assumption A1 it is easy to derive that
 \begin{equation}\label{beta bound 0}
  |\beta_{L}(p^r)| \leq \frac{d}{r}   p^{r\eta}  \quad \mathrm{for~} r \geq 1,
  \end{equation}
\be\label{beta bound 1} 
   | \beta_{L } (p^r) | \leq    \frac1r  \sum_{i=1}^d  | \alpha_{i}(p)|^r \leq \frac{p^{(r-2)\eta}}{r} \sum_{i=1}^d  |\alpha_{i} (p)|^2  \quad \mathrm{for~} r \geq 2 
 \ee 
 and  
\be\label{beta bound 2}
| \beta_{L} (p ) |^2  \leq    \bigg(  \sum_{i=1}^d  | \alpha_{i}(p)| \bigg)^2    \leq d  \sum_{i=1}^d  |\alpha_{i} (p)|^2     .
 \ee
 For  convenience we extend $ \beta_L$ by letting $ \beta_L (n) = 0 $ if $ n $ is not a power of a prime. Then we see that 
 $$ \log L(s) = \sum_n  \frac{ \beta_L (n)}{ n^s } . $$
  Define
  $$ \lambda_t := \lambda( \sigma, x, t) := \max\{ \sigma_{x,t}, \sigma \}   $$
for  $ \sigma \in [ \frac12, 1]$, then we have a following lemma. 
 \begin{lem}\label{Ts Lemma 5.3}
 Let $k $ and $m$ be positive integers such that $    k \leq m \leq 16 k $.  Let $ x = T^{ \frac{\varepsilon}{k}} $ and assume that  $ \frac{ \varepsilon}{k} <   \frac{\kappa}{3} $ and $0 < \varepsilon \leq \frac1{48}$.  Then  there exists a constant $c>0$ such that
$$    \int_T^{2T}  \bigg| \sum_{ n } \frac{ \beta_L (n) g_x(n) }{ n^{ \lambda_t +it}} \bigg|^{2m} dt    \ll T   c^k  k^m       \bigg(  \min\{ \log \log x ,  \log \frac{1}{  \sigma - \frac12  }           \} \bigg)^m    $$
and
$$    \int_T^{2T}  \bigg| \sum_{ n } \frac{ \beta_L (n) g_x(n)    \log n  }{ n^{ \lambda_t +it}} \bigg|^{2m} dt    \ll  T   c^k  k^m       \bigg(  \min  \{ \log x,  \frac{1}{   \sigma - \frac12}     \}  \bigg)^{2     m }    $$
 for $ \frac12 \leq \sigma \leq 1 $.
 \end{lem}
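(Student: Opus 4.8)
The plan is to split $[T,2T]$ according to the value of $\lambda_t$, treat the main range by a mean value estimate for Dirichlet polynomials, and control the exceptional range by the zero-density bound of Lemma \ref{Ts Lemma 5.2}. Set $S:=\{t\in[T,2T]:\sigma_{x,t}>\sigma\}$, so that $\lambda_t=\sigma$ off $S$ and $\lambda_t=\sigma_{x,t}$ on $S$. Writing $F(s):=\sum_n \beta_L(n) g_x(n) n^{-s}$, a Dirichlet polynomial supported on prime powers $n\le x^3$ with $|g_x(n)|\le 1$, the integral over $[T,2T]\setminus S$ is at most $\int_T^{2T}|F(\sigma+it)|^{2m}\,dt$. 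Expanding $F(\sigma+it)^m=\sum_h b_h h^{-it}$ (with $h\le x^{3m}$ and the factor $h^{-\sigma}$ absorbed into $b_h$), and noting that $m\le 16k$, $x=T^{\varepsilon/k}$ and $\varepsilon\le\frac1{48}$ give $x^{3m}\le T^{48\varepsilon}\le T$, the mean value theorem for Dirichlet polynomials yields $\int_T^{2T}|F(\sigma+it)|^{2m}\,dt\ll T\sum_h|b_h|^2$.

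The core of the argument is to estimate $\sum_h|b_h|^2$. Since $\beta_L$ is supported on prime powers, the leading contribution comes from $h$ that are products of $m$ distinct primes; each such $h$ is counted with $m!$ orderings, giving $\sum_h|b_h|^2\ll m!\big(\sum_p |\beta_L(p)g_x(p)|^2 p^{-2\sigma}\big)^m$ up to lower-order terms. I would evaluate the prime sum by partial summation against assumption A6, which gives $\sum_{p\le u}|\beta_L(p)|^2/p=\xi_L\log\log u+O(1)$: writing $p^{-2\sigma}=p^{-1}p^{-2(\sigma-1/2)}$, the damping $p^{-2(\sigma-1/2)}$ is ineffective up to $p\approx x^3$ when $\sigma-\frac12\le 1/\log x$ (sum $\ll\log\log x$) and otherwise cuts the sum off near $p\approx e^{1/(\sigma-1/2)}$ (sum $\ll\log\frac1{\sigma-1/2}$). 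Hence $\sum_p|\beta_L(p)g_x(p)|^2 p^{-2\sigma}\ll\min\{\log\log x,\log\frac1{\sigma-1/2}\}$, and since $m!\le m^m\le(16k)^m\ll c^k k^m$ this produces the first bound. For the second inequality the coefficient carries an extra $\log n=r\log p$, so the same partial summation runs against $\sum_p|\beta_L(p)|^2(\log p)^2 p^{-1}p^{-2(\sigma-1/2)}$; the substitution $v=\log u$ turns this into $\int v\,e^{-2(\sigma-1/2)v}\,dv\ll\min\{(\log x)^2,(\sigma-\tfrac12)^{-2}\}=(\min\{\log x,(\sigma-\tfrac12)^{-1}\})^2$, which raised to the $m$ gives the stated $2m$-th power.

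It remains to bound $\int_S|F(\sigma_{x,t}+it)|^{2m}\,dt$. On $S$ a zero of $L$ lies within $x^{3(\beta-1/2)}/\log x$ of $t$ with $\sigma_{x,t}=2\beta-\frac12>\sigma$, and a Selberg-type pointwise bound on $F(\sigma_{x,t}+it)$ is available from the damping $n^{-(\sigma_{x,t}-\sigma)}$ in the series (whose mass sits near $n\approx x^3$). Feeding this into Lemma \ref{Ts Lemma 5.2} with $\nu=0$ and a suitable $X$, the weighted measure of $S$ carries the factor $T^{1-\kappa(\sigma-1/2)}$, a power saving in $T$; since the main term is already $\gg Tc^k k^m$ (the minimum being bounded below by a positive constant), this exceptional contribution is of lower order and is absorbed.

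The hard part will be the arithmetic bookkeeping of $\sum_h|b_h|^2$. Unlike the $\zeta$ case of \cite{Ts}, the coefficients $\beta_L(p^r)$ are not bounded and may be as large as $p^{r\eta}/r$ by \eqref{beta bound 0}. Controlling the contribution of repeated primes and of prime powers $p^r$ with $r\ge 2$, so that they remain strictly subordinate to the squarefree main term $m!(\sum_p\cdots)^m$ uniformly for $k\le m\le 16k$, is exactly where \eqref{beta bound 1}, \eqref{beta bound 2} and assumption A4 must be used, with the support of $g_x$ in $n\le x^3$ and the abscissa $\sigma\ge\frac12$ keeping the auxiliary sums convergent. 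Verifying that these corrections do not disturb the leading behaviour is the \emph{principal obstacle}.
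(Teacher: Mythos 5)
Your treatment of the non-exceptional range matches the paper's handling of the sum at the fixed abscissa $\sigma$: the paper likewise splits $\sum_n \beta_L(n)g_x(n)(\log n)^\ell n^{-\sigma-it}$ into primes, prime squares and higher prime powers, disposes of the latter two by $O(1)$ bounds via \eqref{beta bound 1} and assumption A4, and applies the mean value theorem of \cite[Lemma 3.3]{Ts} together with A6 exactly as you outline, so the combinatorial part you flag as the principal obstacle is handled there essentially as you expect. The genuine gap is in your treatment of the set $S=\{t:\sigma_{x,t}>\sigma\}$. First, since $\sigma_{x,t}\ge\frac12+\frac{4}{\log x}$ by definition, for $\sigma<\frac12+\frac{4}{\log x}$ (in particular for $\sigma=\frac12$, which is precisely the case invoked in \eqref{proof thm2 eqn1}) one has $S=[T,2T]$: there is no main range, and nothing into which the ``exceptional'' contribution could be absorbed. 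Second, the strategy ``pointwise bound times zero-density-weighted measure'' cannot work uniformly for $\frac12\le\sigma\le1$. Writing $F(s)=\sum_n\beta_L(n)g_x(n)n^{-s}$ as you do, the only pointwise bound available on $S$ is of size roughly $x^{3/2}$ (Cauchy--Schwarz with A4), so $|F(\sigma_{x,t}+it)|^{2m}$ can only be bounded by a positive power $T^{3\varepsilon m/k}$ of $T$, whereas the zero-density saving $T^{-\kappa(\sigma-\frac12)}$ degenerates to $T^{o(1)}$ as $\sigma\to\frac12$ (for $\sigma-\frac12=1/G(T)$ with $G(T)$ near $\log T/(\log\log T)^2$ it is merely $e^{-\kappa(\log\log T)^2}$). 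The product is then far larger than $T$, and the claim that the exceptional contribution is of lower order fails exactly in the regime the lemma is needed.

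The paper's mechanism for the set where $\lambda_t>\sigma$ is the idea you are missing: it never bounds $F(\lambda_t+it)$ pointwise. It writes the difference $F(\lambda_t+it)-F(\sigma+it)$ via $n^{-\lambda_t}-n^{-\sigma}=-\log n\int_\sigma^{\lambda_t}n^{-v}\,dv$ and applies H\"older's inequality to peel off the factor $\bigl(\int_T^{2T}(\lambda_t-\sigma)^{4m}X_1^{4m(\lambda_t-\sigma)}\,dt\bigr)^{1/2}$, which Lemma \ref{Ts Lemma 5.2} (with $\nu=4m$, not $\nu=0$) bounds with the crucial factor $(\log T)^{-4m}$ coming from the smallness of $(\lambda_t-\sigma)^{4m}$ itself, not from the $T$-power saving; the remaining H\"older factor is again a mean value of a Dirichlet polynomial at a fixed abscissa $v\ge\sigma$, to which the estimate \eqref{eqn high moment beta sum} applies. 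That $(\log T)^{-4m}$ gain is what absorbs the $(\log T)^{O(m)}$ losses and works uniformly down to $\sigma=\frac12$; your proposal has no substitute for it.
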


\begin{proof}
For a nonnegative integer $\ell  $ we see that 
$$\sum_{ n } \frac{ \beta_L (n) g_x(n) ( \log n)^\ell }{ n^{ \lambda_t +it}} = \sum_{ n } \frac{ \beta_L (n) g_x(n) ( \log n)^\ell}{ n^{ \sigma  +it}} + \sum_{ n } \frac{ \beta_L (n) g_x(n) ( \log n)^\ell }{ n^{   it}} ( n^{- \lambda_t} - n^{- \sigma}) . $$
We split the first sum on the right-hand side as
\begin{align*}
  \sum_{ n } \frac{ \beta_L (n) g_x(n)( \log n)^\ell }{ n^{ \sigma +it}} = & \sum_{ p } \frac{ \beta_L (p) g_x(p) ( \log p)^\ell}{ p^{ \sigma +it}}+\sum_{ p } \frac{ \beta_L (p^2 ) g_x(p^2 ) ( 2 \log p)^\ell}{ p^{ 2\sigma +2it}} \\
  &+\sum_{p}\sum_{ r \geq 3 } \frac{ \beta_L (p^r ) g_x(p^r ) ( r\log p)^\ell}{ p^{ r\sigma +irt}}   .
  \end{align*}
By \eqref{beta bound 1} and assumption A4 we have 
\begin{align*}
 \bigg|  \sum_{p}\sum_{ r \geq 3 } \frac{ \beta_L (p^r ) g_x(p^r ) (r \log p)^\ell  }{ p^{ r\sigma +irt}}   \bigg|  &  \leq \sum_{p}\sum_{ 3 \leq r \leq \frac{ 3 \log x}{ \log p}  } \frac{ p^{(r-2)\eta }\sum_{i=1}^d |\alpha_i (p  )|^2  ( r \log p)^\ell }{ r p^{ r\sigma  }} \\
   & \ll  \sum_{p}  \frac{ \sum_{i=1}^d |\alpha_i (p  )|^2  ( \log p)^\ell }{   p^{ \frac32  - \eta }} \ll 1 . 
\end{align*}
By \cite[Lemma 3.3]{Ts} we have
\begin{align*}
  \int_T^{2T} \bigg|    \sum_{ p } \frac{ \beta_L (p) g_x(p) ( \log p)^\ell}{ p^{ \sigma +it}} \bigg|^{2m} dt & \ll T m!   \bigg(  \sum_{ p } \frac{ |\beta_L (p) g_x(p)|^2 ( \log p)^{2\ell} }{ p^{ 2 \sigma  }} \bigg)^m  \\
    \int_T^{2T} \bigg|    \sum_{ p } \frac{ \beta_L (p^2) g_x(p^2)( \log p)^\ell }{ p^{ 2\sigma +2 it}} \bigg|^{2m} dt  & \ll T m!   \bigg(  \sum_{ p } \frac{ |\beta_L (p^2) g_x(p^2)|^2 ( \log p)^{2\ell}}{ p^{ 4 \sigma  }} \bigg)^m 
  \end{align*}
provided that $x^{3m } \ll T$, which holds for $ 0< \varepsilon \leq \frac1{48}$. By assumption A6 we have
 $$  \sum_{ p } \frac{ |\beta_L (p) g_x(p)|^2 ( \log p)^{2\ell}  }{ p^{ 2 \sigma  }} \leq \sum_{ p \leq x^3  } \frac{ |\beta_L (p)  |^2 ( \log p)^{2\ell} }{ p } \ll 
 \begin{cases}  
 \log \log x  & \mathrm{if}~ \ell=0 ,\\
 (\log x)^{2\ell }  & \mathrm{if}~ \ell \geq 1 
 \end{cases}   $$
 for $ \frac12 \leq \sigma \leq \frac12 + \frac{4}{ \log x} $, 
 \begin{align*}
   \sum_{ p } \frac{ |\beta_L (p) g_x(p)|^2 ( \log p)^{2\ell}  }{ p^{ 2 \sigma  }}& \leq \sum_{ p    } \frac{ |\beta_L (p)  |^2 ( \log p)^{2\ell}  }{ p^{2 \sigma}  } \ll \int_2^\infty    u^{-2\sigma}(\log u )^{2\ell-1}  du  \\
   &   \ll   \begin{cases}
    \log \frac{1}{  \sigma - \frac12  }  & \mathrm{if}~ \ell=0,\\
    \frac{1}{ (   \sigma - \frac12 )^{2 \ell }} & \mathrm{if}~ \ell \geq 1 
    \end{cases}
   \end{align*} 
 for $ \frac12 + \frac{4}{ \log x}  \leq \sigma \leq 1 $. By \eqref{beta bound 1} and assumption A4 we have
\begin{align*}
\sum_{ p } \frac{ |\beta_L (p^2) g_x(p^2)|^2 (\log p)^{2\ell}}{ p^{ 4 \sigma  }} \ll  \sum_{ p } \frac{ \sum_{i=1}^d |\alpha_i (p )   |^2 (\log p)^{2\ell}}{ p^{ 2-2 \eta  }} \ll 1
\end{align*}
for $ \sigma \geq \frac12 $. Since
\begin{multline*}
   \bigg| \sum_{ n } \frac{ \beta_L (n) g_x(n) (\log n)^\ell }{ n^{ \sigma +it}} \bigg|^{2m}  \\ 
  \leq   3^m \bigg(     \bigg|\sum_{ p } \frac{ \beta_L (p) g_x(p) (\log p)^{\ell} }{ p^{ \sigma +it}}  \bigg|^{2m}  +   \bigg| \sum_{ p } \frac{ \beta_L (p^2 ) g_x(p^2 ) (2 \log p  )^{\ell}}{ p^{ 2\sigma +2it}}\bigg|^{2m}  + c^m   \bigg) 
\end{multline*}
for some $c>0$, by collecting above equations we find that
\begin{equation}\label{eqn high moment beta sum} \begin{split}
\int_T^{2T}&  \bigg| \sum_{ n } \frac{ \beta_L (n) g_x(n) (\log n)^\ell }{ n^{ \sigma +it}} \bigg|^{2m} dt 
\\
 &  \ll  \begin{cases}
 T   c^k  k^m       \bigg(  \min\{ \log \log x ,  \log \frac{1}{  \sigma - \frac12  }           \} \bigg)^m       & \mathrm{if}~     \ell=0 ,\\
 T   c^k  k^m       \bigg(  \min  \{ \log x,  \frac{1}{   \sigma - \frac12}     \}  \bigg)^{2   \ell  m }      & \mathrm{if}~     \ell \geq   1 
 \end{cases}
\end{split}\end{equation}
for some constant $c>0$ and for $ \frac12 \leq \sigma \leq 1 $.

We next  estimate
$$ \int_T^{2T} \bigg|  \sum_{ n } \frac{ \beta_L (n) g_x(n) ( \log n)^\ell }{ n^{   it}} ( n^{- \lambda_t} - n^{- \sigma}) \bigg|^{2m} dt  . $$
By equations in \cite[p. 67]{Ts} the above integral is bounded by
\begin{align*}
\ll &\bigg( \int_T^{2T} ( \lambda_t - \sigma)^{4m} X_1^{4m(\lambda_t - \sigma)} dt \bigg)^{\frac12} \bigg( \int_{\sigma}^\infty X_1^{\sigma-v} dv \bigg)^{2m-\frac12}\\
& \times \bigg(  \int_\sigma^\infty  X_1^{\sigma-v} \int_T^{2T} \bigg| \sum_{n} \frac{ \beta_L(n) g_x(n) (\log n )^{\ell+1} \log ( X_1  n )}{ n^{ v+it}} \bigg|^{4m} dt dv \bigg)^{\frac12}
\end{align*}
with $ X_1= T^{\frac{\varepsilon_1 }{m}} $ for some $ \varepsilon_1 >0$.
Let $ \nu= 4m $ and $ X = X_1^{4m} = T^{4 \varepsilon_1 }  $ in Lemma \ref{Ts Lemma 5.2}.
One can easily check that the assumptions in Lemma \ref{Ts Lemma 5.2} follow from the assumptions in Lemma \ref{Ts Lemma 5.3}. Thus, there exists $c>0$ such that
$$ \int_T^{2T}  ( \lambda_t - \sigma)^{4m} X_1^{4m(\lambda_t - \sigma)} dt  = 
\int_T^{2T} ( \lambda_t - \sigma)^{4m} X_1^{4m(\lambda_t - \sigma)} dt    \ll        c^k  k^{4m}        T^{1 - \frac{ \kappa}{2} (\sigma   -  \frac12 ) }  ( \log T)^{- 4m}    $$ 
 for $ \frac12    \leq \sigma \leq 1 $.  
 By  \eqref{eqn high moment beta sum} we have
 \begin{align*}
\int_\sigma^\infty  &  X_1^{\sigma-v} \int_T^{2T} \bigg| \sum_{n} \frac{ \beta_L(n) g_x(n)(\log n)^{\ell+1} \log ( X_1  n )}{ n^{ v+it}} \bigg|^{4m} dt dv \\
& \ll   T   c^k k^{2m}      \bigg( \frac{ \log T}{k}\bigg)^{2m(2\ell+3) -1 }   \bigg(\min \{ \log x , \frac{1}{ \sigma - \frac12}  \} \bigg)^{2m}      .
\end{align*}
Therefore, by combining above results we obtain 
\begin{multline}\label{eqn high moment beta sum 2}
   \int_T^{2T} \bigg|  \sum_{ n } \frac{ \beta_L (n) g_x(n) (\log n)^\ell }{ n^{   it}} ( n^{- \lambda_t} - n^{- \sigma}) \bigg|^{2m} dt  \\
    \ll    c^k   k^{ 2m -2m\ell   } T^{ 1 - \frac{ \kappa}{4} ( \sigma - \frac12 )}    ( \log T)^{   2m\ell - m }    \bigg(\min \{ \log x , \frac{1}{ \sigma - \frac12}  \} \bigg)^{m}   
    \end{multline}
for $ \frac12   \leq \sigma \leq 1 $. The lemma follows from \eqref{eqn high moment beta sum} and \eqref{eqn high moment beta sum 2}.
\end{proof}

The following lemma is an analogy of \cite[Lemma 5.4]{Ts}. 
 The proof of \cite[Lemma 8]{Le1} is for Hecke $L$-functions of number fields, but it works also for our $L$-functions. So we state the lemma without a proof.
\begin{lem}\label{Ts Lemma 5.4}
Let $ t \in [T, 2T] $, $ \frac12 \leq \sigma \leq 1 $ and $t \neq \im(\rho)$ for any zeros $\rho$ of $L(s)$. Then we have
\begin{multline*}
  \log L(s) = \sum_n \frac{ \beta_L (n) g_x(n)}{ n^{ \lambda_t + it}} + \tilde{L} (s) \\
  + O \bigg(   \bigg(  \frac{  x^{ \frac14 - \frac12 \lambda_t}}{ \log x} + ( \lambda_t - \sigma) \bigg)\bigg(  \bigg| \sum_n \frac{ \beta_L (n) g_x(n) \log n }{ n^{ \sigma_{x,t} + it}} \bigg| + \log T       \bigg)\bigg),
\end{multline*}
where
$$ \tilde{L}  ( s) = \sum_{\rho}  \int_\sigma^{\lambda_t}    \frac{    u- \lambda_t}{  (u+it-\rho)(\lambda_t + it - \rho)} du . $$
\end{lem}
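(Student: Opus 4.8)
The plan is to treat this as a version of Selberg's explicit formula: I would reduce $\log L(\sigma+it)$ on the near-critical line to a smoothed prime sum evaluated at the \emph{safe} abscissa $\lambda_t$, collect the zeros into $\tilde L$, and push everything else into the error. The starting point is the elementary identity obtained by integrating the logarithmic derivative,
\[
\log L(\sigma+it)=\log L(\lambda_t+it)-\int_\sigma^{\lambda_t}\frac{L'}{L}(u+it)\,du ,
\]
into which I would insert the Hadamard/partial-fraction expansion $\frac{L'}{L}(s)=\sum_\rho\frac{1}{s-\rho}+G(s)$. This expansion is furnished by assumptions A2 (finite order) and A3 (functional equation), and Stirling applied to the gamma factors of A3 gives the uniform bound $G(u+it)\ll\log T$ for $\tfrac12\le u\le1$, $t\in[T,2T]$.

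Next I would carry out the $u$-integration over the zeros explicitly. Using $u-\lambda_t=(u+it-\rho)-(\lambda_t+it-\rho)$ one has
\[
\int_\sigma^{\lambda_t}\frac{u-\lambda_t}{(u+it-\rho)(\lambda_t+it-\rho)}\,du=\frac{\lambda_t-\sigma}{\lambda_t+it-\rho}-\log\frac{\lambda_t+it-\rho}{\sigma+it-\rho},
\]
so that $-\sum_\rho\int_\sigma^{\lambda_t}\frac{du}{u+it-\rho}=\tilde L(s)-(\lambda_t-\sigma)\sum_\rho\frac{1}{\lambda_t+it-\rho}$. Since $\sum_\rho\frac{1}{\lambda_t+it-\rho}=\frac{L'}{L}(\lambda_t+it)-G(\lambda_t+it)$, the identity above becomes
\[
\log L(\sigma+it)=\log L(\lambda_t+it)+\tilde L(s)-(\lambda_t-\sigma)\frac{L'}{L}(\lambda_t+it)+O\!\big((\lambda_t-\sigma)\log T\big).
\]
The factor $(\lambda_t-\sigma)$ multiplying $\frac{L'}{L}(\lambda_t+it)$ already accounts for the second half of the claimed error, once $\frac{L'}{L}(\lambda_t+it)$ is replaced by the derivative smoothed sum $\sum_n\beta_L(n)g_x(n)(\log n)\,n^{-\sigma_{x,t}-it}$ plus $O(\log T)$.

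The two remaining ingredients are both instances of Selberg's smoothing device with the kernel $\frac{x^{2w}-x^w}{w^2\log x}$, which has residue $1$ at $w=0$ and reproduces the weights $g_x$. Applying it to $\log L(\lambda_t+it+w)$ and shifting the contour from an abscissa of absolute convergence leftward past $w=0$ to an abscissa about halfway between $\tfrac12$ and $\lambda_t$ yields
\[
\log L(\lambda_t+it)=\sum_n\frac{\beta_L(n)g_x(n)}{n^{\lambda_t+it}}+O\!\bigg(\frac{x^{1/4-\lambda_t/2}}{\log x}\Big(\Big|\sum_n\frac{\beta_L(n)g_x(n)\log n}{n^{\sigma_{x,t}+it}}\Big|+\log T\Big)\bigg),
\]
where $x^{1/4-\lambda_t/2}$ is the size of the kernel on the shifted line; the choice $\lambda_t\ge\sigma_{x,t}$ keeps the base point to the right of every zero out of which $\sigma_{x,t}$ is built, so that near $t$ no singularity of $\log L$ is crossed, and the residual contribution of the remaining (farther) zeros is exactly what Lemmas \ref{Ts Lemma 5.1} and \ref{Ts Lemma 5.2} bound via the density hypothesis A5. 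Running the same contour argument on $\frac{L'}{L}$ supplies the derivative sum at $\sigma_{x,t}$ together with $O(\log T)$, and substituting both estimates into the decomposition of the previous paragraph produces exactly the stated formula.

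The main obstacle is not the contour manipulation but verifying that every Dirichlet sum over prime powers stays bounded even though the coefficients are as large as $|\alpha_i(p)|\le p^\eta$. One must show the contribution of $p^r$ with $r\ge2$ is $O(1)$ and justify the absolute convergence that legitimizes interchanging the kernel integral with the Dirichlet series; this is exactly where A4 together with the pointwise bounds \eqref{beta bound 0}--\eqref{beta bound 2} and A6 enter, precisely as in the proof of Lemma \ref{Ts Lemma 5.3}. Granting these estimates, the argument of \cite[Lemma 8]{Le1} for Hecke $L$-functions transfers essentially verbatim, the only genuinely new checks being that the general gamma factors of A3 still yield $G\ll\log T$ and that A5 provides the zero counts in the precise shape demanded by Lemma \ref{Ts Lemma 5.2}.
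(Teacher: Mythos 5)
Your outline is correct and is essentially the argument the paper relies on: the paper states this lemma without proof, deferring to \cite[Lemma 8]{Le1} (itself modelled on \cite[Lemma 5.4]{Ts}), and that proof is exactly your Selberg-type scheme --- integrate $L'/L$ from $\sigma$ to $\lambda_t$, split off the zero sum via partial fractions to produce $\tilde L$, and approximate $\log L$ and $L'/L$ at the abscissa $\lambda_t\ge\sigma_{x,t}$ by the mollified Dirichlet polynomials, with the coefficient sums controlled by A4/A6 as in Lemma \ref{Ts Lemma 5.3}. (Only minor quibble: the near-$t$ zero count in this pointwise lemma is handled by the definition of $\sigma_{x,t}$ and the standard $O(\log T)$ zero-density in unit intervals, not by Lemmas \ref{Ts Lemma 5.1}--\ref{Ts Lemma 5.2}, which enter later in the moment estimates.)
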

The following lemma can be derived from the same arguments as in the proof of \cite[Lemma 5.5]{Ts}, so we state it without a proof. 
\begin{lem}\label{Ts Lemma 5.5}
\begin{align*}
|\im( \tilde{L} (s)) |   & \ll ( \lambda_t - \sigma) \bigg(   \bigg|  \sum_n \frac{ \beta_L (n) g_x(n) \log n }{ n^{ \lambda_t  + it}}\bigg| + \log T \bigg),  \\
|\re(\tilde{L}(s))| & \ll  ( \lambda_t - \sigma)\big(  1+ ( \lambda_t - \sigma) \log x + \log^+  \frac{1}{  \eta_t \log x }   \big) \bigg(   \bigg|  \sum_n \frac{ \beta_L (n) g_x(n) \log n }{ n^{ \lambda_t + it}}\bigg| + \log T \bigg),
\end{align*}
where $ \log^+ w := \max\{ \log w , 0 \} $ and $ \eta_t =  \min | t-\gamma|$ with the minimum taken over all zeros $ \beta+i \gamma $ of $L(s)$ with $ \beta \geq \frac12$. Moreover, we have
$$  \int_T^{2T} \left( \log^+  \frac{1}{  \eta_t \log x }  \right)^{2k} dt \ll T(ck)^{2k} . $$
\end{lem}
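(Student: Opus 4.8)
The plan is to follow Selberg's method as developed in \cite[Chapter 5]{Ts}, computing the real and imaginary parts of the zero–sum $\tilde L(s)$ in closed form and isolating the asymmetry between them. First I would insert the partial–fraction identity
$$\frac{u-\lambda_t}{(u+it-\rho)(\lambda_t+it-\rho)}=\frac{1}{\lambda_t+it-\rho}-\frac{1}{u+it-\rho}$$
into the definition of $\tilde L$ and integrate each summand over $u\in[\sigma,\lambda_t]$. Writing a zero as $\rho=\beta+i\gamma$ and $g=t-\gamma$, a direct computation gives for each $\rho$
\begin{align*}
\im\int_\sigma^{\lambda_t}\frac{u-\lambda_t}{(u+it-\rho)(\lambda_t+it-\rho)}\,du&=\mathrm{sign}(g)\Big(\arctan\tfrac{\lambda_t-\beta}{|g|}-\arctan\tfrac{\sigma-\beta}{|g|}\Big)-\frac{g(\lambda_t-\sigma)}{(\lambda_t-\beta)^2+g^2},\\
\re\int_\sigma^{\lambda_t}\frac{u-\lambda_t}{(u+it-\rho)(\lambda_t+it-\rho)}\,du&=\frac{(\lambda_t-\beta)(\lambda_t-\sigma)}{(\lambda_t-\beta)^2+g^2}-\frac12\log\frac{(\lambda_t-\beta)^2+g^2}{(\sigma-\beta)^2+g^2}.
\end{align*}
In each line the last term is $(\lambda_t-\sigma)$ times $\im\frac{1}{\lambda_t+it-\rho}$, respectively $\re\frac{1}{\lambda_t+it-\rho}$; I will call it the $\tfrac{L'}{L}$–term, and the leading arctangent (resp.\ logarithm) the \emph{geometric} term.

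Second, I would sum the $\tfrac{L'}{L}$–terms over all zeros. By the Hadamard factorization (assumption A2) and the functional equation (assumption A3), the $\Gamma$– and $Q$–factors contribute $O(\log T)$ uniformly for $\frac12\le\re\le2$, $\im\in[T,2T]$, so $\sum_\rho\frac{1}{\lambda_t+it-\rho}=\frac{L'}{L}(\lambda_t+it)+O(\log T)$. Because $\lambda_t\ge\sigma_{x,t}$ keeps a distance $\gtrsim\sigma_{x,t}-\frac12$ from the real part of every nearby zero (this is exactly how $\sigma_{x,t}$ is defined), $\frac{L'}{L}(\lambda_t+it)$ is well approximated by the Dirichlet polynomial, $\frac{L'}{L}(\lambda_t+it)=-\sum_n\frac{\beta_L(n)g_x(n)\log n}{n^{\lambda_t+it}}+O(\log T)$. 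Hence the $\tfrac{L'}{L}$–terms contribute $\ll(\lambda_t-\sigma)\big(\big|\sum_n\frac{\beta_L(n)g_x(n)\log n}{n^{\lambda_t+it}}\big|+\log T\big)$ to both $\im\tilde L$ and $\re\tilde L$, which is the shape asserted.

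The geometric terms carry the asymmetry. For the imaginary part each arctangent difference is bounded by $\pi$; for a zero with $\beta\notin[\sigma,\lambda_t]$ the subtraction formula shows it is in fact $\ll\frac{(\lambda_t-\sigma)|g|}{g^2+(\lambda_t-\beta)(\sigma-\beta)}$, so it carries the gain $(\lambda_t-\sigma)$, while the zeros with $\beta\in[\sigma,\lambda_t]$ and small $|g|$ are precisely those forcing the Dirichlet sum to be large, so their $O(1)$ contributions are dominated by $(\lambda_t-\sigma)(|\sum_n\cdots|+\log T)$; the far zeros ($|g|>1$) are handled by the cruder product bound $\ll(\lambda_t-\sigma)^2/g^2$ summed against Lemma \ref{Ts Lemma 5.1} and assumption A5. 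For the real part the geometric term is the unbounded logarithm: for the zero closest to $t$ it is of size $\log\frac{1}{\eta_t}$, and splitting zeros at $|g|=\frac{1}{\log x}$ while isolating the closest one produces the extra factor $1+(\lambda_t-\sigma)\log x+\log^+\frac{1}{\eta_t\log x}$. Combined with the $\tfrac{L'}{L}$–term this gives the bound for $|\re\tilde L|$.

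Finally, the moment estimate is a layer–cake computation driven by zero counting: $\log^+\frac{1}{\eta_t\log x}>\lambda$ forces a zero within $\frac{e^{-\lambda}}{\log x}$ of $t$, so $\me\{t\in[T,2T]:\log^+\frac{1}{\eta_t\log x}>\lambda\}\ll\frac{e^{-\lambda}}{\log x}\,T\log T=e^{-\lambda}\frac{k}{\varepsilon}T$ (using $x=T^{\varepsilon/k}$ and $N_L(\tfrac12,T)\ll T\log T$), whence $\int_T^{2T}(\log^+\frac{1}{\eta_t\log x})^{2k}\,dt=\int_0^\infty 2k\lambda^{2k-1}\me\{\cdots>\lambda\}\,d\lambda\ll T\int_0^\infty 2k\lambda^{2k-1}\min\{1,e^{-\lambda}\frac{k}{\varepsilon}\}\,d\lambda\ll T(ck)^{2k}$ after splitting at $\lambda=\log\frac{k}{\varepsilon}$ and applying Stirling's formula. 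The hardest points I expect are the second and third steps: making the Dirichlet approximation of $\frac{L'}{L}(\lambda_t+it)$ rigorous with the correct dependence on the safety margin encoded in $\sigma_{x,t}$, and, for the real part, showing that the accumulated geometric logarithms fold cleanly into the multiplicative form $(\lambda_t-\sigma)\big(1+(\lambda_t-\sigma)\log x+\log^+\frac{1}{\eta_t\log x}\big)\big(|\sum_n\cdots|+\log T\big)$ rather than merely producing a separate additive term. This is the bookkeeping carried out in \cite[Lemma 5.5]{Ts}; the only new input is to re-run it with $\beta_L$ in place of the von Mangoldt coefficients, controlling the prime–power contributions through \eqref{beta bound 1}, assumption A4 and assumption A6 exactly as in the preceding lemmas.
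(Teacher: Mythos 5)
Your proposal is correct and follows essentially the same route as the paper, which states this lemma without proof precisely because it is the argument of \cite[Lemma 5.5]{Ts} rerun with $\beta_L(n)\log n$ in place of the von Mangoldt function: partial fractions in the zero sum, the Selberg-type control of $\sum_\rho|\lambda_t+it-\rho|^{-2}$ via the definition of $\sigma_{x,t}$, the arctangent/logarithm asymmetry between the imaginary and real parts, and the layer-cake zero-counting bound for the $\eta_t$ moment. The only point to tighten is your intermediate claim $\frac{L'}{L}(\lambda_t+it)=-\sum_n\frac{\beta_L(n)g_x(n)\log n}{n^{\lambda_t+it}}+O(\log T)$, whose true error term also involves the Dirichlet polynomial itself; this is harmless here since the stated bound already carries $\bigl|\sum_n\cdots\bigr|+\log T$.
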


\subsection{Proof of Theorems \ref{thm 2nd moment} and \ref{thm high moment}} \label{sec proof moment}

First we want to find an upper bound of the $2k$-th moment
$$ \int_T^{2T} \bigg| \log L ( \sigma_T + it ) - \sum_{ n} \frac{ \beta_L (n) g_x (n) }{ n^{ \sigma_T + it }} \bigg|^{2k} dt. $$ 
Let $\sigma = \frac12$, $\ell=1$ and $ k=m$ in Lemma \ref{Ts Lemma 5.3}, then we get
\begin{equation}\label{proof thm2 eqn1}
\int_T^{2T}  \bigg| \sum_n \frac{ \beta_L (n) g_x(n) \log n }{ n^{ \sigma_{x,t} + it}} \bigg|^{2k} dt \ll   c^k    k^k  T ( \log x)^{2  k}. 
\end{equation}
By  Lemmas \ref{Ts Lemma 5.4} and \ref{Ts Lemma 5.5} and \eqref{proof thm2 eqn1},  we have
\begin{equation}\label{proof thm2 eqn2}\begin{split}
\int_T^{2T}&  \bigg| \log L ( \sigma_T + it ) - \sum_{ n} \frac{ \beta_L (n) g_x (n) }{ n^{ \sigma_T + it }} \bigg|^{2k} dt \\
\ll & c^k \int_T^{2T} \bigg|   \sum_n \frac{ \beta_L (n) g_x(n)}{ n^{ \lambda_t + it}} -  \sum_n \frac{ \beta_L (n) g_x(n)}{ n^{ \sigma_T + it}} \bigg|^{2k} dt     \\
+& c^k \int_T^{2T}  ( \lambda_t - \sigma_T )^{2k}\bigg(  1+ ( \lambda_t - \sigma_T ) \log x + \log^+  \frac{1}{  \eta_t \log x }   \bigg)^{2k}  \bigg|  \sum_n \frac{ \beta_L (n) g_x(n) \log n }{ n^{ \lambda_t + it}}\bigg|^{2k}  dt  \\
+& c^k ( \log T)^{2k}  \int_T^{2T}  ( \lambda_t - \sigma_T )^{2k}\big(  1+ ( \lambda_t - \sigma_T ) \log x + \log^+  \frac{1}{  \eta_t \log x }   \big)^{2k}    dt  \\
+&  c^k   k^{2k} T e^{- \varepsilon \frac{ \log T}{ G(T)} } 
\end{split}\end{equation}
for some $c>0$. It remains to bound the integrals on the right-hand side.

Since $ x= T^{ \frac{ \varepsilon}{k}}$  and $ k \leq \frac{\varepsilon}{4}  ( \log \log T)^2 $, we see that
$$ \sigma_T - \frac12 =  \frac{1}{G(T)} \geq   \frac{ ( \log \log T)^2 }{ \log T}  \geq    \frac{4}{ \log x }. $$
By \eqref{eqn high moment beta sum 2} we have
\begin{equation}\label{proof thm2 eqn3}
 \int_T^{2T} \bigg|   \sum_n \frac{ \beta_L (n) g_x(n)}{ n^{ \lambda_t + it}} -  \sum_n \frac{ \beta_L (n) g_x(n)}{ n^{ \sigma_T + it}} \bigg|^{2k} dt  \ll  c^k   k^{ 2k    } T e^{ - \frac{ \kappa}{4} \frac{ \log T}{ G(T) } }  \frac{  G(T)^k   }{ ( \log T)^k }  
 \end{equation}
for some $ c>0$.
By Lemmas \ref{Ts Lemma 5.2} and \ref{Ts Lemma 5.5} we have
$$ \int_T^{2T} ( \lambda_t - \sigma)^{2m} dt  \ll   \frac{ c^k m^{2m}}{ ( \log T)^{2m}}  T e^{  - ( \kappa - \frac{ 3\varepsilon}{ 2k } ) \frac{ \log T}{G(T)} }    $$  
and  
 \begin{align*}
\int_T^{2T} & ( \lambda_t - \sigma)^{2m}      \bigg( \log^+  \frac{1}{  \eta_t \log x }\bigg)^{2m}   dt    \\
& \leq \bigg( \int_T^{2T} ( \lambda_t - \sigma)^{4m} dt \bigg)^{\frac12}    \bigg( \int_T^{2T}      \bigg( \log^+  \frac{1}{  \eta_t \log x }\bigg)^{4m}dt \bigg)^{\frac12}   \\ 
& \ll        \frac{ c^k m^{4m}}{ ( \log T)^{2m}}  T e^{  - ( \frac{\kappa }{2} - \frac{ 3\varepsilon}{ 4k } ) \frac{ \log T}{G(T)} }    
  \end{align*}
  for $ k \leq m \leq 4k $.  Thus, we obtain
\begin{equation}\label{proof thm2 eqn4}\begin{split}
 \int_T^{2T}  &  ( \lambda_t - \sigma_T )^{2m}\big(  1+ ( \lambda_t - \sigma_T ) \log x + \log^+  \frac{1}{  \eta_t \log x }   \big)^{2m}       dt   \\
 &   \ll    \frac{ c^k m^{4m}}{ ( \log T)^{2m}}  T e^{  - ( \frac{\kappa }{2} - \frac{ 3\varepsilon}{ 4k } ) \frac{ \log T}{G(T)} } 
 \end{split}\end{equation}
 for $ k \leq m \leq  2k $.
By  Lemma \ref{Ts Lemma 5.3}, the Cauchy-Schwarz inequality and the above inequality we have
\begin{equation}\label{proof thm2 eqn5} \begin{split}
  \int_T^{2T}  & ( \lambda_t - \sigma_T )^{2k}\bigg(  1+ ( \lambda_t - \sigma_T ) \log x + \log^+  \frac{1}{  \eta_t \log x }   \bigg)^{2k}  \bigg|  \sum_n \frac{ \beta_L (n) g_x(n) \log n }{ n^{ \lambda_t + it}}\bigg|^{2k}  dt  \\
 &  \ll   \frac{ c^k k^{5k}    G(T)^{2k}  }{ ( \log T)^{2k}}  T e^{  - ( \frac{\kappa }{4} - \frac{ 3\varepsilon}{ 8k } ) \frac{ \log T}{G(T)} } .
 \end{split} \end{equation}
   Therefore, by \eqref{proof thm2 eqn2} -- \eqref{proof thm2 eqn5} there exist $ \kappa_0 >0 $ such that    
\begin{equation}\label{proof thm2 eqn6}
\int_T^{2T}   \bigg| \log L ( \sigma_T + it ) - \sum_{ n} \frac{ \beta_L (n) g_x (n) }{ n^{ \sigma_T + it }} \bigg|^{2k} dt \ll     c^k k^{4k}   T e^{  - \kappa_0  \frac{ \log T}{G(T)} }.
\end{equation}

Let $ k=1$ in \eqref{proof thm2 eqn6}, then we see that 
\begin{equation} \label{proof thm2 eqn11}
\int_T^{2T}   \bigg| \log L ( \sigma_T + it ) - \sum_{ n} \frac{ \beta_L (n) g_x (n) }{ n^{ \sigma_T + it }} \bigg|^{2} dt \ll     T e^{  - \kappa_0  \frac{ \log T}{G(T)} }.
\end{equation}
Here, $ x = T^\varepsilon$ and $  0< \varepsilon < \min\{  \frac{1}{48},    \frac{\kappa}3   \} $.  Let $ e^{ \frac{G(T)}{2}} \leq Y \leq x $, then  we have
\begin{equation} \label{proof thm2 eqn12}
 \int_T^{2T}   \bigg|   \sum_{ n>Y  } \frac{ \beta_L (n) g_x (n) }{ n^{ \sigma_T + it }} \bigg|^{2} dt \ll T   \sum_{   n >Y  } \frac{ | \beta_L (n)   |^2  }{ n^{2 \sigma_T   }} \ll T \frac{Y^{1-2\sigma_T}  }{  (2 \sigma_T - 1 ) \log Y } 
 \end{equation}
by \cite[Lemma 4.1]{LL}. Thus, Theorem \ref{thm 2nd moment} follows from \eqref{proof thm2 eqn11} and \eqref{proof thm2 eqn12}.

 Next we prove Theorem \ref{thm high moment}.  
We see that \eqref{thm 2 eqn 1} holds by \eqref{eqn high moment beta sum} and \eqref{proof thm2 eqn6}. 
The proof of \eqref{thm 2 eqn 2} is similar, but simpler than the proof of Lemma \ref{Ts Lemma 5.3}. Since
\begin{align*}
  \log L( \sigma_T , \X ) = & \sum_{ p } \frac{ \beta_L (p) \X (p) }{ p^{ \sigma_T }}+\sum_{ p } \frac{ \beta_L (p^2 ) \X (p^2 ) }{ p^{ 2\sigma_T } } +O(1) , 
  \end{align*}
  by  \cite[Lemma 3.3]{Ts} we have
\begin{align*}
\E[  | \log L( \sigma_T , \X )|^{2k} ]   \leq  & c^k \bigg(    k!  \bigg( \sum_{ p } \frac{ | \beta_L (p) |^2  }{ p^{ 2\sigma_T }} \bigg)^k + k! \bigg( \sum_{ p } \frac{ | \beta_L (p^2 )|^2  }{ p^{ 4\sigma_T } } \bigg)^k  + 1 \bigg)  
  \end{align*}
  for some $ c > 0 $.
  By \eqref{beta bound 1} and assumption A4 we have
\begin{align*}
\sum_{ p } \frac{ |\beta_L (p^2)  |^2  }{ p^{ 4 \sigma_T   }} \ll  \sum_{ p } \frac{ \sum_{i=1}^d |\alpha_i (p )   |^2 }{ p^{ 2-2 \eta  }} \ll 1
\end{align*}
     By assumption A6 we have
 $$   \sum_{ p   } \frac{ |\beta_L (p)  |^2 }{ p^{2 \sigma_T} } \ll  \int_2^\infty  \frac{ du}{  u^{1+ \frac{2}{ G(T)}} \log u } \ll  \log G(T).$$
Thus, we have
$$
\E[  | \log L( \sigma_T , \X )|^{2k} ]  \ll c^k k! ( \log G(T) )^k  $$
for some $ c >0$.

\section{Discrepancy}\label{sec disc}

In this section we will prove Theorem \ref{thm disc} for $G(T)$ satisfying \eqref{GT range}.  First we need to extend \cite[Proposition 5.1]{LL}.
Define the Fourier transforms of $\Phi_T $ and $ \Phrand $  by
$$ \Phhat ( \xb, \yb ) := \int_{\mathbb{R}^{2J}} e^{  2\pi i ( \xb \cdot \ub + \yb \cdot \vb)} d\Phi_T ( \ub, \vb) $$
and 
$$ \Phrhat ( \xb, \yb ) := \int_{\mathbb{R}^{2J}} e^{  2\pi i ( \xb \cdot \ub + \yb \cdot \vb)} d\Phrand ( \ub, \vb) ,$$
where  $ \xb = ( x_1 , \ldots , x_J ) $ and similarly $ \yb, \ub, \vb$ are vectors in $ \mathbb{R}^J$ and  $ \xb \cdot \ub  := \sum_{ j \leq J } x_j u_j $ is the dot product. Then we obtain the following proposition.
\begin{pro}\label{pro disc trans}
Assume   \eqref{GT range}. Given constant $A_4>0$, there exists a constant $ A_5 >0$ such that 
 $$ \Phhat( \xb, \yb) = \Phrhat (\xb, \yb) +   O \bigg(     \frac{1}{ ( \log T)^{A_4}} \bigg) $$
  for $\max_{ j \leq J} \{ |x_j | , |y_j | \}  \leq  \frac{  \sqrt{ \log T}}{ A_5 \sqrt{ G(T)} \log \log T} $.
\end{pro}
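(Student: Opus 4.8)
My plan is to reduce both characteristic functions to those of the Dirichlet polynomials $R_{j,Y}$ by the second moment bound of Theorem~\ref{thm 2nd moment}, and then to compare the two polynomial characteristic functions by a moment‑matching argument, tying the degree $N$ of the expansion to $Y$ through $Y^N\le T^{1-\delta}$.

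\emph{Reduction to Dirichlet polynomials.} Set $\Theta:=\max_{j\le J}\{|x_j|,|y_j|\}$ and write the real linear form $V(t):=\sum_{j=1}^J\big(x_j\re+y_j\im\big)\log L_j(\sigma_T+it)$, so that $\Phhat(\xb,\yb)=\frac1T\int_T^{2T}e^{2\pi iV(t)}\,dt$; let $V^{\mathrm{rand}}$ be the same form with $\log L_j(\sigma_T,\X)$ in place of $\log L_j(\sigma_T+it)$, so $\Phrhat(\xb,\yb)=\E[e^{2\pi iV^{\mathrm{rand}}}]$. Let $W(t)$ and $W^{\mathrm{rand}}$ denote these forms with each $\log L_j$ replaced by $R_{j,Y}$. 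From $|e^{2\pi ia}-e^{2\pi ib}|\le 2\pi|a-b|$, the Cauchy–Schwarz inequality, and $\re\log L_j=\log|L_j|$, $\im\log L_j=\arg L_j$, I bound
\[
\Big|\Phhat(\xb,\yb)-\tfrac1T\!\int_T^{2T}\!e^{2\pi iW(t)}\,dt\Big|\ll \Theta\sum_{j=1}^J\Big(\tfrac1T\!\int_T^{2T}\!|\log L_j-R_{j,Y}|^2\,dt\Big)^{1/2},
\]
which Theorem~\ref{thm 2nd moment} controls; the random analogue uses $\E|\log L_j(\sigma_T,\X)-R_{j,Y}(\sigma_T,\X)|^2=\sum_{p^r>Y}|\beta_{L_j}(p^r)|^2p^{-2r\sigma_T}$ by orthogonality of the $\X(p)^r$, i.e.\ exactly the tail bounded in \eqref{proof thm2 eqn12}. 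Taking $\log Y\asymp A_4\,G(T)\log\log T$ (admissible since $e^{G(T)/2}\le Y\le T^{\varepsilon}$ throughout \eqref{GT range}) and using $\Theta\le\sqrt{\log T}/(A_5\sqrt{G(T)}\log\log T)$ makes both errors $\ll(\log T)^{-A_4}$ once $A_5$ is large.

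\emph{Comparison of the polynomial characteristic functions.} It then remains to prove $\frac1T\int_T^{2T}e^{2\pi iW(t)}\,dt=\E[e^{2\pi iW^{\mathrm{rand}}}]+O((\log T)^{-A_4})$. I approximate $e^{2\pi iW}$ by its Taylor polynomial $P_N(W)=\sum_{\ell=0}^N(2\pi iW)^\ell/\ell!$ and choose the degree in the window $c\,\Theta\sqrt{\log G(T)}\le N\le (1-\delta)\log T/\log Y$, whose nonemptiness across \eqref{GT range} is guaranteed by the size of $\Theta$. Each monomial of $P_N(W(t))$ has the shape $(n_2/n_1)^{it}$ with $n_1,n_2\le Y^N\le T^{1-\delta}$; combining $\frac1T\int_T^{2T}(n_2/n_1)^{it}\,dt=\delta_{n_1=n_2}+O(T^{-\delta})$ with $\E[\X(p)^a\overline{\X(p)^{b}}]=\delta_{a=b}$ and the mean value theorem for Dirichlet polynomials (\cite[Lemma~3.3]{Ts} and its Montgomery–Vaughan refinement) shows that the diagonal terms of $\frac1T\int_T^{2T}P_N(W(t))\,dt$ reproduce $\E[P_N(W^{\mathrm{rand}})]$, while the off‑diagonal terms contribute $\ll T^{-\delta}\,e^{O(\Theta^2\log G(T))}=O(T^{-\delta/2})$, since $\Theta^2\log G(T)=o(\log T)$ so the truncated exponential has total mass $T^{o(1)}$.

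\emph{Taylor remainder and the main obstacle.} Using $|e^{i\theta}-P_N(W)|\le(2\pi|W|)^{N+1}/(N+1)!$, each remainder $\frac1T\int_T^{2T}|e^{2\pi iW}-P_N(W)|\,dt$ and $\E|e^{2\pi iW^{\mathrm{rand}}}-P_N(W^{\mathrm{rand}})|$ is bounded by $(2\pi)^{N+1}/(N+1)!$ times the corresponding $(N+1)$‑st absolute moment. The essential point is that, since $W$ is a genuine Dirichlet polynomial, the mean value theorem gives $\frac1T\int_T^{2T}|W|^{2k}\,dt\ll k!\,C^k(\Theta^2\log G(T))^k$ for \emph{all} $k\le(1-\delta)\log T/(2\log Y)$, a far larger range of moment orders than the $O((\log\log T)^2)$ permitted in Theorem~\ref{thm high moment}, which is what lets the argument survive the lower end of \eqref{GT range}; the random moments obey the same bound. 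Taking $N+1=2k$ at the top of its window, Stirling's formula yields a remainder $\ll\rho^{k}$ with $\rho\ll A_4/A_5^{2}$, and since $k\ge(1-\delta)\log\log T/(2A_4)$ throughout \eqref{GT range}, choosing $A_5$ large (exponentially in $A_4^2$) forces $\rho^k\ll(\log T)^{-A_4}$. The main obstacle is precisely this calibration: the second moment step pushes $\log Y$ up to order $A_4G(T)\log\log T$, the diagonal step caps the degree at $N\le(1-\delta)\log T/\log Y$, and the remainder step needs $N$ above a fixed multiple of the typical size $\Theta\sqrt{\log G(T)}$ of $W$; reconciling these three demands across $(\log T)^{1/3}\le G(T)\le \log T/(\log\log T)^2$ is what dictates the admissible size of $|x_j|,|y_j|$ (hence the freedom to enlarge $A_5$), the binding case being $G(T)\approx\log T/(\log\log T)^2$, where $N$ is only of order $\log\log T$ and the remainder barely closes.
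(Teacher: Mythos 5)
Your proposal follows essentially the same route as the paper: a Cauchy--Schwarz reduction to the Dirichlet polynomials $R_{j,Y}$ via Theorem~\ref{thm 2nd moment} with $\log Y\asymp G(T)\log\log T$, a Taylor expansion truncated at degree $N\asymp\log T/(G(T)\log\log T)$ whose moments are matched diagonally (the content of \cite[Lemma 4.5]{LL} and the second half of \cite[Proposition 5.1]{LL}), and a remainder of the shape $(c\Theta^2\log\log T/N)^N$ forcing $A_5$ to be exponentially large in $A_4^2$, exactly the paper's calibration $A_5\ge\sqrt{10cA_6}\,e^{5A_6^2}$. The only difference is that you spell out the steps the paper delegates to \cite{LL}, and your diagnosis of the binding case $G(T)\approx\log T/(\log\log T)^2$ is consistent with the paper's choice of $N$.
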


\begin{proof}
 By definition we get
 \begin{align*}
 \Phhat( \xb, \yb ) &  = \frac1T \int_T^{2T} \exp \bigg[      2 \pi i  \sum_{ j \leq J} \big( x_j \log | L_j ( \sigma_T  + it ) | + y_j \arg L_j ( \sigma_T + i t )   \big) \bigg] dt, \\ 
 \Phrhat( \xb, \yb) & = \mathbb{E}  \bigg[   \exp  \bigg[     2 \pi i  \sum_{ j \leq J} \big( x_j \log | L_j ( \sigma_T , X ) | + y_j \arg L_j ( \sigma_T , \X )   \big) \bigg]   \bigg].
 \end{align*}
Since the inequality
$$  | e^{ix} - e^{iy} |^2  = 4 \sin^2 \bigg( \frac{ x-y}{2}  \bigg)   \leq |x-y|^2  $$
holds for  any $ x,y \in \mathbb{R} $, by the Cauchy-Schwarz inequality and Theorem \ref{thm 2nd moment} with 
$$ \log Y = A_6 G(T) \log \log T    $$
we have
\begin{align*}
 \Phhat( \xb, \yb ) &   -  \frac1T \int_T^{2T} \exp \bigg[      2 \pi i  \sum_{ j \leq J} \big( x_j \re ( R_{j ,Y} ( \sigma_T  + it ))  + y_j \im ( R_{j,Y}   ( \sigma_T + i t ) )   \big) \bigg] dt \\
  = & O \bigg(   \frac1T \int_T^{2T}    \sum_{j \leq J} ( |x_j| + |y_j | ) |  \log L_j ( \sigma_T + it ) - R_{j,Y} ( \sigma_T + it ) |    dt \bigg) \\
  = & O \bigg(    \sum_{j \leq J} ( |x_j| + |y_j | )  \bigg( \frac1T \int_T^{2T} |  \log L_j ( \sigma_T + it ) - R_{j,Y} ( \sigma_T + it ) |^2     dt \bigg)^{ \frac12}  \bigg) \\
  = & O \bigg(  \frac{ M }{ ( \log T)^{A_6}}    \bigg)  
\end{align*}
for all $ |x_j | , |y_j | \leq M$.   
 Let 
 $$ N = \bigg[  \frac{ \log T}{ 10 A_6 G(T) \log \log T} \bigg] ,$$
then by the Taylor theorem and \cite[Lemma 4.5]{LL}
  we have
\begin{align*}
 \Phhat( \xb, \yb ) - &   \sum_{n=0}^{2N-1} \frac{  (2 \pi i)^n  }{ n! T }     \int_T^{2T} \bigg(        \sum_{ j \leq J} \big( x_j \re ( R_{j ,Y} ( \sigma_T  + it ))  + y_j \im ( R_{j,Y}   ( \sigma_T + i t ) )   \big) \bigg)^n  dt \\
=  &   O \bigg(  \frac{ c^N M^{2N} }{(2N)! }        \frac1T \int_T^{2T}         \sum_{ j \leq J} \big| R_{j,Y}   ( \sigma_T + i t )     \big|^{2N}  dt    +  \frac{ M }{( \log T)^{A_6}}      \bigg)\\
  =  &   O \bigg(      \bigg(  \frac{ c M^2   \log \log T}{N} \bigg)^N     +  \frac{ M }{( \log T)^{A_6}}   \bigg)  
\end{align*}
  for some $ c>0$. 
Let 
$$ M  =  \frac{  \sqrt{ \log T}}{ A_5 \sqrt{ G(T)} \log \log T} $$
with a constant $A_5  \geq \sqrt{ 10 c A_6   }e^{ 5A_6^2} $, then we have 
\begin{align*}
 \Phhat( \xb, \yb ) = &   \sum_{n=0}^{2N-1} \frac{  (2 \pi i)^n  }{ n! T }     \int_T^{2T} \bigg(        \sum_{ j \leq J} \big( x_j \re ( R_{j ,Y} ( \sigma_T  + it ))  + y_j \im ( R_{j,Y}   ( \sigma_T + i t ) )   \big) \bigg)^n  dt \\
   &  +   O \bigg(     \frac{ 1 }{( \log T)^{A_6 - \frac12}}   \bigg) .  
\end{align*}
By following the second half of the proof of \cite[Proposition 5.1]{LL} one can conclude that the proposition holds.

\end{proof}

We next need to introduce Beurling-Selberg functions.   
Define 
$$ F_{ [a,b], \Delta} (z) = \frac12  (   H( \Delta(z-a) ) - K ( \Delta ( z-a) )  +  H( \Delta ( b-z)) - K( \Delta (b-z)) )   $$
for $ z\in \mathbb{C}$ and $ \Delta > 0$, where
$$ H(z) = \frac{   \sin^2 (\pi z) }{ \pi^2 } \bigg(   \sum_{n=-\infty}^\infty \frac{ \mathrm{sgn} (n) }{(z-n)^2 } + \frac{2}{z} \bigg) \quad \mathrm{and} \quad   K(z) = \frac{  \sin^2 (\pi z) }{ ( \pi z)^2}. $$
Then we summarize some results in \cite[Section 7]{LLR1} as a lemma.
\begin{lem}\label{lem BS}
For all $x \in \mathbb{R}$ we have $| F_{ [a,b], \Delta} (x) | \leq 1 $ and 
$$ 0 \leq \mathbf{1}_{[a,b] } (x)  - F_{ [a,b], \Delta} (x)  \leq K (  \Delta ( x-a)) + K( \Delta ( b-x )) . $$
Moreover, the Fourier transform $ \widehat{F}_{ [a,b], \Delta} $ satisfies
$$  \widehat{F}_{ [a,b], \Delta}  =  \begin{cases}
\widehat{\mathbf{1}}_{[a,b]  }(y) +  O ( \Delta^{-1})  & \mathrm{if} ~ |y| \leq \Delta , \\
0 & \mathrm{if}~ |y| \geq \Delta.
\end{cases}$$
\end{lem}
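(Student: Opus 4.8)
The plan is to recognize $F_{[a,b],\Delta}$ as the Selberg minorant of the indicator $\mathbf{1}_{[a,b]}$ assembled from the classical Beurling function, and then to read off the three assertions from the standard properties of that function, exactly as organized in \cite[Section 7]{LLR1}. The starting observation is that the combination $B(z) := H(z) + K(z)$ is precisely Beurling's entire function of exponential type $2\pi$ majorizing the sign function. From the series defining $H$ one checks directly that $H$ is odd and $K$ is even, whence the reflection identity $B(z) + B(-z) = 2K(z)$. I would then import the two classical facts $B(x) \geq \mathrm{sgn}(x)$ for real $x$ and $\int_{\mathbb{R}}(B(x)-\mathrm{sgn}(x))\,dx = 1$.

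With these in hand, the function $H(u) - K(u) = B(u) - 2K(u)$ is the Beurling minorant of $\mathrm{sgn}(u)$: writing $\mathrm{sgn}(u) - (H(u)-K(u)) = \mathrm{sgn}(u) - B(u) + 2K(u)$ and using $B \geq \mathrm{sgn}$ gives the upper bound $\le 2K(u)$, while substituting $2K(u) = B(u)+B(-u)$ turns the same quantity into $\mathrm{sgn}(u) + B(-u) \geq \mathrm{sgn}(u) + \mathrm{sgn}(-u) = 0$, giving the lower bound. Decomposing $\mathbf{1}_{[a,b]}(x) = \tfrac12\bigl(\mathrm{sgn}(x-a) + \mathrm{sgn}(b-x)\bigr)$ and replacing each sign by its minorant at scale $\Delta$ produces exactly $F_{[a,b],\Delta}$. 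Summing the per-sign estimates then yields $\mathbf{1}_{[a,b]}(x) - F_{[a,b],\Delta}(x) = \tfrac12\bigl[(\mathrm{sgn} - (H-K))(\Delta(x-a)) + (\mathrm{sgn} - (H-K))(\Delta(b-x))\bigr]$, each bracket lying in $[0, 2K]$, which is precisely the claimed two-sided bound $0 \le \mathbf{1}_{[a,b]}(x) - F_{[a,b],\Delta}(x) \le K(\Delta(x-a)) + K(\Delta(b-x))$.

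The remaining two claims follow from structural facts. For the Fourier transform: since $H$ and $K$ are entire of exponential type $2\pi$, the rescaled $F_{[a,b],\Delta}$ is entire of exponential type $2\pi\Delta$, and it lies in $L^1$ because $\mathbf{1}_{[a,b]} - F_{[a,b],\Delta}$ is nonnegative and integrable; Paley--Wiener then forces $\widehat{F}_{[a,b],\Delta}(y) = 0$ for $|y| \geq \Delta$. For $|y| \leq \Delta$, integrating the two-sided bound gives $\|\mathbf{1}_{[a,b]} - F_{[a,b],\Delta}\|_1 \le \tfrac{2}{\Delta}\int_{\mathbb{R}} K = \tfrac{2}{\Delta}$ (using $\int_{\mathbb{R}} K = 1$), so $|\widehat{F}_{[a,b],\Delta}(y) - \widehat{\mathbf{1}}_{[a,b]}(y)| \le \|\mathbf{1}_{[a,b]} - F_{[a,b],\Delta}\|_1 = O(\Delta^{-1})$ uniformly in $y$. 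The upper bound $F_{[a,b],\Delta} \le \mathbf{1}_{[a,b]} \le 1$ is immediate from the minorant property.

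I expect the genuinely delicate point to be the pointwise lower bound $F_{[a,b],\Delta}(x) \geq -1$ completing $|F_{[a,b],\Delta}| \le 1$: the crude estimate $F_{[a,b],\Delta} \ge \mathbf{1}_{[a,b]} - K(\Delta(x-a)) - K(\Delta(b-x))$ only gives $\ge -(K_a + K_b)$, which is not by itself sharp enough near the endpoints, so this requires the finer pointwise analysis of the Beurling function carried out in \cite[Section 7]{LLR1}, which I would cite rather than reprove. Everything else reduces to the classical Beurling--Selberg inequalities and Paley--Wiener, so the proof is essentially an assembly of known facts.
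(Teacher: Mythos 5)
Your derivation is correct and is consistent with the paper, which gives no proof of this lemma at all and simply imports it from \cite[Section 7]{LLR1}; your argument (writing $B=H+K$ for Beurling's majorant of $\mathrm{sgn}$, using $H$ odd and $K$ even to get $B(z)+B(-z)=2K(z)$, decomposing $\mathbf{1}_{[a,b]}$ into two sign functions, and finishing with $\int_{\mathbb R}K=1$ and Paley--Wiener for the Fourier claims) is exactly the standard Beurling--Selberg computation underlying that reference. You also correctly flag the one genuinely delicate point, the pointwise lower bound $F_{[a,b],\Delta}(x)\ge -1$, which indeed does not follow from the crude inequality $F\ge \mathbf{1}_{[a,b]}-K(\Delta(x-a))-K(\Delta(b-x))$ and must be taken from the finer analysis in \cite[Section 7]{LLR1} --- the same citation the paper itself relies on for the entire lemma.
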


 We  are ready to prove Theorem \ref{thm disc} for $G(T)$ satisfying \eqref{GT range}. By Corollary \ref{cor large dev} there exists a constant $A_3>0$ such that 
\begin{align*}
 \frac{1}{T} \me \{ t \in [T, 2T] : \Lv ( \sigma_T + it ) \notin  I_T  \} & \ll \frac{1}{ ( \log T)^{10}} ,  \\
 \P \{  \Lv ( \sigma_T , \X  ) \notin  I_T \} & \ll \frac{1}{ ( \log T)^{10}},
\end{align*}
where
 $$ I_T := [- A_3 \log \log T , A_3 \log \log T ]^{2J} .$$
 Then we see that 
 \begin{align*}
 \Phi_T(  \R) & = \Phi_T ( \R \cap I_T  ) + O \bigg(  \frac{1}{ ( \log T)^{10}} \bigg), \\
 \Phrand (  \R) & = \Phrand ( \R \cap I_T   ) + O \bigg(  \frac{1}{ ( \log T)^{10}} \bigg)
  \end{align*}
for any $\R \in \mathbb{R}^{2J}$. Thus, we have
\begin{equation}\label{D sigma T eqn 1}
  \D(\sigma_T) =  \sup_{\R  \subset I_T  } | \Phi_T ( \R) -  \Phrand ( \R) | + O \bigg(  \frac{1}{ ( \log T)^{10}} \bigg) , 
  \end{equation}
 where $\R  \subset I_T  $ runs over all rectangular boxes of $ \mathbb{R}^{2J}$ with  sides parallel to the coordinate axes.
   By \eqref{D sigma T eqn 1} it is enough to show that
\begin{equation}\label{PhiT Phrand eqn 1}
\Phi_T(\R) - \Phrand(\R) = O( M^{-1})    
\end{equation}
for
$$ \R = \prod_{j=1}^J  I_{1,j} \times \prod_{j=1}^J  I_{2,j} \subset I_T  ,$$
where $I_{1,j} = [a_j, b_j ]$ and $ I_{2,j} = [c_j, d_j ] $ for $ j = 1 , \ldots , J$.

 By definition we see that
 \begin{align*}
  \Phi_T ( \R) & =   \frac{1}{T} \int_T^{2T} \prod_{j=1}^J  \mathbf{1}_{ I_{1,j} } (   \log | L_j ( \sigma_T + i t )|   ) \mathbf{1}_{ I_{2,j}  } ( \arg L_j ( \sigma_T + i t )  ) dt    , \\
   \Phrand (\R) &  = \E \bigg[ \prod_{j=1}^J  \mathbf{1}_{ I_{1,j} } (   \log | L_j ( \sigma_T , \X  )|   )  \mathbf{1}_{ I_{2,j} } ( \arg L_j ( \sigma_T, \X)  )    \bigg] .
   \end{align*}
 By Lemma \ref{lem BS} with $\Delta = M$  we have
 \begin{equation}\label{proof disc eqn1}\begin{split}
 \Phi_T ( \R)  & =   \frac{1}{T} \int_T^{2T} \prod_{j=1}^J  F_{ I_{1,j} , M} (   \log | L_j ( \sigma_T + i t )|   ) F_{ I_{2,j} , M } ( \arg L_j ( \sigma_T + i t )  ) dt + O( M^{-1} )  , \\ 
 \Phrand (\R) & = \E \bigg[ \prod_{j=1}^J  F_{ I_{1,j} , M} (   \log | L_j ( \sigma_T , \X  )|   ) F_{ I_{2,j} , M} ( \arg L_j ( \sigma_T, \X)  )    \bigg] + O( M^{-1} )  .
 \end{split}\end{equation}
 To confirm  the above $O$-terms, it requires inequalities similar to 
\begin{align*}
   \frac{1}{T} \int_T^{2T} & K( M (  \log |L_1 ( \sigma_T + it ) | - \alpha) ) dt   \\
   &  =  \frac{1}{M} \int_{-M}^M \bigg( 1 - \frac{ |u|}{M} \bigg) e^{- 2 \pi i \alpha u } \Phhat ( u , 0 , \ldots , 0 ) du  \ll  \frac{1}{M},
\end{align*} 
which holds  by Fourier inversion, Proposition \ref{pro disc trans},   \cite[Lemma 7.1]{LL} and
 $$ \hat{K} (x) = \max (  0 , 1- |x| ) . $$
 
 By Fourier inversion, Lemma \ref{lem BS} and Proposition \ref{pro disc trans} we obtain 
 \begin{equation}\label{proof disc eqn2}\begin{split}
&  \frac{1}{T} \int_T^{2T} \prod_{j=1}^J  F_{ I_{1,j} , M} (   \log | L_j ( \sigma_T + i t )|   ) F_{ I_{2,j} , M } ( \arg L_j ( \sigma_T + i t )  ) dt \\
& = \int_{ \mathbb{R}^{2J}}  \bigg(  \prod_{j=1}^J  \widehat{F}_{ I_{1,j} , M} (x_j) \widehat{F}_{ I_{2,j} , M} (y_j) \bigg)  \Phhat ( - \xb, - \yb ) d \xb d \yb \\ 
& = \int_{ \substack{ |x_j|, |y_j | \leq M  \\  j = 1, \ldots , J } }  \bigg(  \prod_{j=1}^J  \widehat{F}_{ I_{1,j} , M} (x_j) \widehat{F}_{ I_{2,j} , M} (y_j) \bigg)  \Phrhat ( - \xb, - \yb ) d \xb d \yb  + O\bigg(  \frac{(  M  \log \log T )^{2J}}{( \log T)^{A_4}}    \bigg) \\
&= \E \bigg[ \prod_{j=1}^J  F_{ I_{1,j} , M} (   \log | L_j ( \sigma_T , \X  )|   ) F_{ I_{2,j} , M} ( \arg L_j ( \sigma_T, \X)  )    \bigg] + O\bigg(  \frac{(  M  \log \log T )^{2J}}{( \log T)^{A_4}}    \bigg)  .
 \end{split}\end{equation}
Here, we also have used that 
$$|\hat{F}_{[a,b], M } (y)  |  \leq   |  \hat{\mathbf{1}}_{[a,b]} ( y) | + O(M^{-1}) \ll  \log \log T   $$
for $ |y| \leq M $ and $ | b-a | \ll \log \log T $. We choose  $A_4 $ sufficiently large so that
$$ \frac{(  M  \log \log T )^{2J}}{( \log T)^{A_4}}  \leq \frac{1}{M} , $$
then  \eqref{PhiT Phrand eqn 1} holds by \eqref{proof disc eqn1} and \eqref{proof disc eqn2}. This completes the proof of Theorem \ref{thm disc}.

\section*{Acknowledgements}

This work has been supported by the National Research Foundation of Korea (NRF) grant funded by the Korea government (MSIP) (No. 2019R1F1A1050795).

\end{document}